% !TEX encoding = UTF-8 Unicode
\documentclass[11pt]{amsart}
\usepackage{amsmath, amssymb, array}
\usepackage{mathtools}
\usepackage[all]{xy}
\usepackage{ascmac}
\usepackage[dvips]{graphicx}
\usepackage{color}
\usepackage{amscd}
\usepackage{fancyhdr}
\usepackage[bookmarks=false]{hyperref} 
\usepackage{amsrefs}
\usepackage{cleveref}
\usepackage{chngcntr}
\usepackage{apptools}
\usepackage[titletoc,title]{appendix}
\newcommand{\ol}{\overline}

\newcommand{\rank}{\mathrm{rank}}
\newcommand{\Sel}{\mathrm{Sel}^{(2)}}

\newcommand{\Ker}{\mathrm{Ker}}
\newcommand{\im}{\mathrm{Im}}
\newcommand{\res}{\mathrm{res}}

\newcommand{\disc}{\mathrm{disc}}

\newcommand{\tors}{\mathrm{tors}}
\renewcommand{\1}{\mathbf{1}}
\newcommand{\fn}{\footnote}

 \DeclareFontFamily{U}{wncy}{}
    \DeclareFontShape{U}{wncy}{m}{n}{<->wncyr10}{}
    \DeclareSymbolFont{mcy}{U}{wncy}{m}{n}
    \DeclareMathSymbol{\Sha}{\mathord}{mcy}{"58} 
    
\title{Infinitely many hyperelliptic curves with exactly two rational points: Part II} 

\author{Hideki Matsumura}
\address[Hideki Matsumura]{Faculty of Science and Technology, Department of Mathematics, Keio University, 14-1, Hiyoshi 3-chome, Kouhoku-ku, Yokohama-shi, Kanagawa-ken, Japan}
\email{hidekimatsumura@keio.jp}

\thanks{This research was supported in part by KAKENHI 18H05233 as well as the KiPAS program FY2014--2018 of the Faculty of Science and Technology at Keio University. 
The author was supported by the Research Grant of Keio Leading-edge Laboratory of Science $\&$ Technology 2018--2019 (Grant Numbers 000053 and 000005).} 
 \subjclass[2010]{primary 14G05; secondary 11G30; tertiary 11Y50}
% 11G30 : Curves of arbitrary genus or genus $\ne 1$ over global fields
% 11Y50 : Computer solution of Diophantine equations
% 11G20 : Curves over finite and local fields
% 14G22 : Rigid analytic geometry
% 14G05 : Rational points
\keywords{rational points, hyperelliptic curves, $2$-descent, Lutz-Nagell theorem, Richelot isogeny}
\date{\today}

\pagestyle{fancy}
 \lhead[]{\thepage}
 \chead[]{}
 \rhead[\thepage]{}
 \lfoot[]{}
 \cfoot[]{}
 \rfoot[]{}
 
\makeatletter
  \def\section{\@startsection{section}{1}{\z@}%
     {4ex plus 0ex}%
     {4ex plus 0ex}%
     {\normalfont\large\bfseries}}
  \makeatother

\makeatletter
  \def\subsection{\@startsection{subsection}{1}{\z@}%
     {2ex plus 0ex}%
     {2ex plus 0ex}%
     {\normalfont\normalsize\bfseries}}
  \makeatother

\theoremstyle{plain}
 \newtheorem{theorem}{Theorem}[section] 
  \crefname{theorem}{Theorem}{Theorems}
 \newtheorem{proposition}[theorem]{Proposition}
 \crefname{proposition}{Proposition}{Propositions}
 \newtheorem{lemma}[theorem]{Lemma}
 \crefname{lemma}{Lemma}{Lemmas}
 \newtheorem{corollary}[theorem]{Corollary}
  \crefname{corollary}{Corollary}{Corollaries}
 \newtheorem{conjecture}[theorem]{Conjecture}
   \crefname{conjecture}{Conjecture}{Conjectures}
 
 \crefname{question}{Question}{Questions}
 
   \crefname{problem}{Problem}{Problems}
 \newtheorem{notation}[theorem]{Notation}
    \crefname{notation}{Notation}{Notations}
 
\theoremstyle{definition} 
 
  \crefname{definition}{Definition}{Definitions}
 
   \crefname{example}{Example}{Examples}
 \newtheorem{remark}[theorem]{Remark}
   \crefname{remark}{Remark}{Remarks}

\begin{document}

%%%Title&Author

\maketitle

%%% Abstract 

\begin{abstract}
In the previous paper, Hirakawa and the author determined the 
set of rational points of a certain infinite family 
of hyperelliptic curves $C^{(p;i,j)}$ parametrized by a prime number $p$ and integers $i$, $j$.
In the proof, we used the standard $2$-descent argument and a Lutz-Nagell theorem that was proven by Grant.
In this paper, we extend the above work. 
By using the descent theorem, the proof for $j=2$ is reduced to elliptic curves of rank $0$ that are independent of $p$.
On the other hand, 
 for odd $j$, we consider another hyperelliptic curve $C'^{(p;i,j)}$ whose Jacobian variety is isogenous to that of $C^{(p;i,j)}$,
and prove that the Mordell-Weil rank of the Jacobian variety of $C'^{(p;i,j)}$ is $0$ by $2$-descent. 
Then, we determine the set of rational points of $C^{(p;i,j)}$ by using the Lutz-Nagell type theorem.
\end{abstract}

\tableofcontents

\section{Main theorem}  
In arithmetic geometry, it is one of central problems to determine the sets of rational points on algebraic curves. 
In this paper, we study the sets of rational points on hyperelliptic curves.
A hyperelliptic curve defined over $\mathbb{Q}$ with a rational point can be embedded into its Jacobian variety. 
If $C$ is an elliptic curve, then $2$-descent 
makes it possible to bound the Mordell-Weil rank of 
$C$ by means of the $2$-Selmer group, and the Lutz-Nagell theorem makes it possible to determine the torsion points of 
$C$ by means of the discriminant. 
For example, by applying these arguments, we can prove that the only rational points on 
the elliptic curve defined by $y^2=x(x +p)(x-p)$
with a prime number $p \equiv 3 \pmod{8}$ are $(x,y) = (0,0)$, $(p,0)$, $(-p,0)$ and $\infty$, 
 i.e., such a prime number $p$ is never a congruent number (cf. \cite[D27]{Guy} and the references therein). 
It is also known that there exist infinitely many hyperelliptic curves with no or only one rational point for infinitely many genera \cite[Theorem 4]{NH}.
In the genus $2$ case, their proof is reduced to the fact that the Mordell-Weil rank of the Jacobian variety of a certain quotient of a Fermat curve $F(5)$ is $0$
(\cite[Proposition 4.1%,Theorems 5.1, 5.3
]{GR}). 
In \cite[Corollary 1 and Theorem 2]{FR}, by using certain covering technique, Flynn and Redmond proved that for every prime number $p \equiv 5 \pmod{8}$, the hyperelliptic curve
defined by $y^2 = (x^2 +2p)(x^2 +3p)(x^2 +4p)$ has no nontrivial rational points. 

In this paper, we prove the following theorem. 

\begin{theorem} \label{MT} 
Let $p$ be a prime number, $i, j \in {\mathbb{Z}}$, and $C^{(p;i,j)}$ be the hyperelliptic curve defined by
\[y^2=x(x^2+2^ip^{j})(x^2+2^{i+1}p^{j}).\] 
\begin{enumerate}
\item 
For all $p$, we have $C^{(p;0,2)}(\mathbb{Q})=\{(0,0), \infty\}$.
\item If $p \equiv 3 \pmod{4}$, then
\[C^{(p;2,2)}(\mathbb{Q})=
\begin{cases}
\{(0,0), (6,\pm 216), \infty\} & (p = 3),\\
\{(0,0), \infty\} & (p \neq 3).
\end{cases}\]
\item If $p \equiv 13 \pmod{16}$, then $C^{(p;2,1)}(\mathbb{Q})=\{(0,0), \infty\}$.  
\item If $p \equiv 5 \pmod{16}$, then $C^{(p;2,3)}(\mathbb{Q})=\{(0,0), \infty\}$. 
\end{enumerate}
\end{theorem}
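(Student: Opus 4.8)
The plan is to bound the Mordell--Weil rank of the Jacobian from above and then feed that into the Lutz--Nagell-type theorem to enumerate rational points. In every case $C=C^{(p;i,j)}$ is a genus-$2$ curve $y^2=f(x)$ with $f(x)=x(x^2+2^ip^j)(x^2+2^{i+1}p^j)=x^5+3\cdot 2^ip^j x^3+2^{2i+1}p^{2j}x$, so the two rational points $(0,0)$ and $\infty$ are just the visible Weierstrass/point-at-infinity data. Since $f$ is an \emph{odd} polynomial, the map $(x,y)\mapsto(-x,\sqrt{-1}\,y)$ is an order-$4$ automorphism over $\mathbb{Q}(\sqrt{-1})$, so $J=\mathrm{Jac}(C)$ carries $\mathbb{Z}[\sqrt{-1}]$-multiplication there; this is what ultimately splits the $2$-descent into quadratic pieces. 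The overall strategy is: (i) show $\rank J(\mathbb{Q})=0$ by a $2$-descent governed by the \'etale algebra $L=\mathbb{Q}[x]/(f)=\mathbb{Q}\times K_1\times K_2$ with $K_1=\mathbb{Q}(\sqrt{-2^ip^j})$, $K_2=\mathbb{Q}(\sqrt{-2^{i+1}p^j})$; and (ii) combine $\rank J(\mathbb{Q})=0$ with Grant's Lutz--Nagell-type theorem to reduce $C(\mathbb{Q})$ to a finite, explicit list controlled by $\disc f$, which is checked directly.

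For the even case $j=2$ (parts (1) and (2)) the decisive simplification is that $p^2$ is a perfect square, so $K_1=\mathbb{Q}(\sqrt{-2^i})$ and $K_2=\mathbb{Q}(\sqrt{-2^{i+1}})$ are \emph{independent of $p$}: for both $i=0$ and $i=2$ one gets $L=\mathbb{Q}\times\mathbb{Q}(\sqrt{-1})\times\mathbb{Q}(\sqrt{-2})$, and equivalently $C^{(p;i,2)}$ is the quadratic twist by $p$ of the fixed curve $y^2=x(x^2+2^i)(x^2+2^{i+1})$. I would run the $2$-descent on $J$: because $\mathbb{Q}(\sqrt{-1})$ and $\mathbb{Q}(\sqrt{-2})$ have class number one, the global part of the descent map $J(\mathbb{Q})/2J(\mathbb{Q})\hookrightarrow L^*/(L^*)^2$ is supported on units and on the primes dividing $2p$, a $p$-independent package, while $p$ enters only through the local conditions at $p$, $2$, $\infty$. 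Carrying these out bounds $\Sel(J/\mathbb{Q})$ so that $\rank J(\mathbb{Q})=0$, the computation collapsing to the statement that certain fixed elliptic curves have rank $0$. In part (2) the hypothesis $p\equiv 3\pmod 4$ fixes the splitting of $p$ in $\mathbb{Q}(\sqrt{-1})$ and is exactly what makes the local condition at $p$ decisive; in part (1) no congruence is needed. Grant's theorem then lists the finitely many candidates, leaving only $(0,0),\infty$, together with the sporadic integral point $(6,\pm 216)$ when $p=3$ (indeed $6\cdot 72\cdot 108=216^2$).

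For the odd case (parts (3) and (4)) $p^j$ is not a square, so $K_1=\mathbb{Q}(\sqrt{-p})$, $K_2=\mathbb{Q}(\sqrt{-2p})$ are imaginary quadratic and genuinely depend on $p$, and a direct descent on $J(C)$ is unpleasant. Instead I would pass to the Richelot-isogenous curve attached to the factorization $f=x\cdot(x^2+2^ip^j)\cdot(x^2+2^{i+1}p^j)$: the Richelot brackets of the three quadratics $u_1=x^2+2^ip^j$, $u_2=x^2+2^{i+1}p^j$, $u_3=x$ yield, up to a quadratic twist, the dual curve $C'=C'^{(p;i,j)}\colon y^2=x(x^2-2^ip^j)(x^2-2^{i+1}p^j)$ (the isogeny sends $2^ip^j\mapsto -2^ip^j$). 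As the Richelot isogeny is defined over $\mathbb{Q}$, $J(C)\sim J(C')$ and $\rank J(C)(\mathbb{Q})=\rank J(C')(\mathbb{Q})$. The \'etale algebra of $C'$ is now $\mathbb{Q}\times\mathbb{Q}(\sqrt{2^ip^j})\times\mathbb{Q}(\sqrt{2^{i+1}p^j})=\mathbb{Q}\times\mathbb{Q}(\sqrt{p})\times\mathbb{Q}(\sqrt{2p})$, built from \emph{real} quadratic fields, and $C'(\mathbb{R})$ has five real Weierstrass points, supplying stronger archimedean conditions. I would run the $2$-descent on $J(C')$; the congruences $p\equiv 13\pmod{16}$ ($j=1$) and $p\equiv 5\pmod{16}$ ($j=3$) are chosen to pin down the $2$-ranks of the (narrow) class groups of $\mathbb{Q}(\sqrt p)$ and $\mathbb{Q}(\sqrt{2p})$ together with the splitting of $2$ and $p$, forcing $\Sel(J(C')/\mathbb{Q})$ to be as small as possible and hence $\rank J(C')(\mathbb{Q})=0$. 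With the rank of $J(C)$ thereby shown to vanish, Grant's theorem again reduces $C^{(p;2,j)}(\mathbb{Q})$ to a finite list, which leaves only $(0,0)$ and $\infty$.

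I expect the main obstacle to be the $2$-descent itself: matching the local conditions at $p$ and $2$ against the global unit/class-group data so that the Selmer group is \emph{exactly} small enough to give rank $0$, uniformly in $p$ under the stated congruences. This is precisely where the residues modulo $16$ must be exploited, through quadratic (and quartic) residue symbols and genus theory for the $2$-part of $\Cl(\mathbb{Q}(\sqrt p))$ and $\Cl(\mathbb{Q}(\sqrt{2p}))$; showing that the Selmer bound is actually attained (equivalently, controlling $\Sha[2]$) is the delicate point. A secondary, more routine obstacle is the endgame in Grant's theorem: once $\rank J(\mathbb{Q})=0$, one must bound the denominators and heights of the finitely many torsion-or-integral candidate points and eliminate all but the expected ones, the only genuine surprise being the extra point at $p=3$.
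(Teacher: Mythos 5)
Your plan for parts (3) and (4) is essentially the paper's: replace $C^{(p;2,j)}$ by its Richelot-isogenous partner, whose defining quintic factors over $\mathbb{Q}\times\mathbb{Q}(\sqrt{p})\times\mathbb{Q}(\sqrt{2p})$, run Schaefer-style $2$-descent there, and finish with Grant's Lutz--Nagell theorem. Two caveats. First, you pin down the Richelot dual only ``up to a quadratic twist,'' but Mordell--Weil rank is \emph{not} invariant under quadratic twist, so the twist must be computed exactly; it works out, since the dual is $y^2=2x(x^2-2^ip^j)(x^2-2^{i+1}p^j)$, which is isomorphic to $y^2=x(x^2-2^{i+2}p^j)(x^2-2^{i+3}p^j)$ and, for $i=2$, to the \emph{untwisted} curve $y^2=x(x^2-p^j)(x^2-2p^j)$. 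Second, the congruences mod $16$ (and $32$) are not used to control class-group $2$-ranks: in the paper $p\equiv 5\pmod 8$ enters through $N(\epsilon)=-1$ for the fundamental units of $\mathbb{Q}(\sqrt{p})$ and $\mathbb{Q}(\sqrt{2p})$, while the finer congruences produce $\mathbb{Q}_2$-points with $x=5$, $6$ or $13$ that pin down $\im(\delta_2)$.

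The genuine gap is in parts (1) and (2): you propose to prove $\rank J^{(p;i,2)}(\mathbb{Q})=0$ by $2$-descent on the Jacobian itself and then invoke Grant's theorem, and this step fails. As the paper records (with MAGMA evidence), $\dim_{\mathbb{F}_2}\Sel(\mathbb{Q},J^{(p;i,2)})$ is $3$ when $p\equiv3\pmod 8$ and $(i,j)=(2,2)$, and $4$ or $6$ in the remaining cases of parts (1)--(2) (outside the cases $p\equiv\pm3\pmod 8$, $(i,j)=(0,2)$ already settled in the earlier paper), whereas $\dim_{\mathbb{F}_2}J^{(p;i,2)}(\mathbb{Q})[2]=2$; so the descent you describe yields only $\rank\le 1$, $2$ or $4$, never $0$, and class number one of $\mathbb{Q}(i)$ and $\mathbb{Q}(\sqrt{-2})$ does not remove the extra Selmer classes (which may well be $\Sha[2]$). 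Note also that a genus-$2$ Jacobian $2$-descent does not ``collapse to elliptic curves'': the order-$4$ map $(x,y)\mapsto(-x,\sqrt{-1}\,y)$ gives $\mathbb{Z}[i]$-multiplication only over $\mathbb{Q}(i)$, not a splitting of $J$ over $\mathbb{Q}$. What the paper actually does for (1)--(2) is a covering (descent-theorem) argument on the curve itself: write $(x,y)=(U/W,V/W^3)$ in lowest terms, show $\gcd\bigl(UW,(U^2+2^ip^2W^2)(U^2+2^{i+1}p^2W^2)\bigr)$ is a power of $2$, split $V^2=UW(\cdots)(\cdots)$ into $\delta s^2=UW$ and $\delta t^2=(\cdots)(\cdots)$ with $\delta\in\{1,2\}$ (plus a separate case $p\mid U$), and reduce every branch to explicit elliptic curves of rank $0$ independent of $p$, whose points are listed and pulled back; neither $\rank J(\mathbb{Q})$ nor Grant's theorem is used in these parts. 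This mechanism also explains the hypothesis $p\equiv3\pmod4$ in part (2), which your proposal misreads: when $-1$ is a square mod $p$, the branch $p\mid U$, $p\nmid uw$ leads to the curve $y^2=x(x^2+8)$, which has rank $1$ (the point $(1,3)$ is non-torsion), consistent with the genuine extra points on $C^{(5;2,2)}$ and $C^{(17;2,2)}$; so no argument of this type --- and no rank-$0$ statement --- can succeed for $p\equiv1\pmod 4$ in part (2).
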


In the proof of \cref{MT} (1) and (2), we use the descent theorem (cf. \cite[Theorem 10, Example 9, 10]{Stoll2011}).
On the other hand, in the proof of \cref{MT} (3) and (4), we use a Richelot isogeny (cf. \cite{BM}, \cite{Nicholls}, \cite{Richelot}), the standard 2-descent argument (\cite{Schaefer98}) and a Lutz-Nagell type theorem that was proven by Grant (\cite[Theorem 3]{Grant}).

In the previous paper, Hirakawa and the author considered the following conditions.
\begin{enumerate}
\item $p \equiv 3 \pmod{16}$ and $(i,j)=(0,1)$.
\item $p \equiv 11 \pmod{16}$ and $(i,j)=(1,1)$.
\item $p \equiv  3 \pmod{8}$ and $(i,j)=(0,2)$.
\item $p \equiv - 3 \pmod{8}$ and $(i,j)=(0,2)$.
\end{enumerate}
Then, we proved that 
$C^{(p;i,j)}$ have exactly two obvious rational points (\cite[Theorem 1.1]{HM2}).

What about other cases?
In view of the abc conjecture, \cite{Granville}, \cite{Gusic} and \cite{PS}, 
we expect that $C^{(p;i,j)}$ has no nontrivial rational points (i.e. rational points except for $(0,0)$ and $\infty$) for all but finitely many $p$.
More precisely, we conjecture the following via computation by MAGMA \cite{Bosma-Cannon-Playoust}.

\begin{conjecture} \label{p=17}
Let $p$ be an odd prime number, $i, j \in {\mathbb{Z}}$, and $C^{(p;i,j)}$ be the hyperelliptic curve 
defined by the following equation.
 \[y^2=x(x^2+2^ip^{j})(x^2+2^{i+1}p^{j}).\]

\begin{enumerate}
\item 
Suppose that $(i,j)=(0,1)$. Then, 
\[C^{(p;i,j)}(\mathbb{Q})=
\begin{cases}
\{(0,0), (8,\pm 252), \infty\} & (p = 17),\\
\{(0,0), \infty\}  & (p \neq 17).
\end{cases}\]

\item Suppose that $(i,j)=(2,2)$. Then, 
\[C^{(p;i,j)}(\mathbb{Q})=
\begin{cases}
\{(0,0), (6,\pm 216), \infty\} & (p = 3),\\
\{(0,0), (5,\pm 375), \infty\}  & (p = 5),\\
\{(0,0), (136,\pm 235824), \infty\} & (p = 17),\\
\{(0,0), \infty\} & (p \neq 3, \; 5, \; 17).
\end{cases}\]
 
\item Suppose that $(i,j)=(2,3)$. Then, 
\[C^{(p;i,j)}(\mathbb{Q})=
\begin{cases}
\{(0,0), (72,\pm 45360), \infty\} & (p = 3),\\
\{(0,0), (98,\pm 115248), \infty\}  & (p = 7),\\
\{(0,0), \infty\} & (p \neq 3, \; 7).
\end{cases}\]

\item Suppose that $(i,j)=(0,2)$, $(1,1)$ or $(2,1)$. Then, 
\[C^{(p;i,j)}(\mathbb{Q})=\{(0,0), \infty\}.\]
\end{enumerate}
\end{conjecture}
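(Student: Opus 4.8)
The plan is to follow the same two-pronged strategy used for \cref{MT}: treat the even-$j$ pairs by the descent theorem of Stoll, and the odd-$j$ pairs by a Richelot isogeny together with the standard $2$-descent and Grant's Lutz--Nagell type theorem. For each pair $(i,j)$ one writes $f(x)=x(x^2+2^ip^{j})(x^2+2^{i+1}p^{j})$, so the relevant \'etale algebra is $L=\mathbb{Q}\times\mathbb{Q}(\sqrt{-2^ip^{j}})\times\mathbb{Q}(\sqrt{-2^{i+1}p^{j}})$ (the two quadratic factors are irreducible since $-2^ip^{j}<0$, and distinct since their ratio is $\sqrt{2}$), and the $2$-Selmer group $\Sel$ of the Jacobian—or of the isogenous curve's Jacobian for odd $j$—is cut out inside $L^{*}/(L^{*})^{2}$ by local conditions at $2$, $p$ and $\infty$. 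The goal in the generic case is to show that $\Sel$ coincides with the image of the obvious rational $2$-torsion, which forces $\rank=0$; Grant's theorem then bounds the heights and denominators of any rational point, reducing the determination of $C^{(p;i,j)}(\mathbb{Q})$ to a finite search that recovers only $(0,0)$ and $\infty$.

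First I would fix each $(i,j)$ and analyse the local Selmer conditions as a function of the residue class of $p$. The parts of \cref{MT} already dispose of the classes in which these conditions are most restrictive—$p\equiv 13\pmod{16}$ for $(2,1)$ and $p\equiv 5\pmod{16}$ for $(2,3)$ are precisely the classes in which the Hilbert-symbol computations at $2$ and $p$ collapse $\Sel$ onto the $2$-torsion, and the case $(0,2)$ for all $p$ is \cref{MT}\,(1) outright. The remaining work is to carry out the same computation in the complementary classes, e.g.\ $p\equiv 1\pmod 4$ for $(2,2)$ and the other residues modulo $16$ for $(0,1)$, $(2,1)$ and $(2,3)$, and to verify that, apart from the finitely many exceptional primes listed, the local image again forces the Selmer rank to vanish. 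For the exceptional primes $p=3,5,7,17$ one instead exhibits the extra rational point directly and checks that it accounts for the entire rank.

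The main obstacle is uniformity in $p$. For a single prime everything is effectively computable: one runs the $2$-descent, and if the resulting bound is at most $1<2=\dim$, a Chabauty--Coleman argument pins down $C^{(p;i,j)}(\mathbb{Q})$ outright. The difficulty in \cref{p=17} is that the assertion must hold for the whole infinite family. In the classes handled in \cref{MT} the local conditions reduce to a computation on fixed elliptic curves independent of $p$, which is exactly what makes the rank bound uniform; in the complementary classes this $p$-independent reduction is not available, the Selmer rank may reach $2$, and one loses both the uniform rank bound and the Chabauty input, so one would have to separate a genuine increase of $\rank$ from a contribution of $\Sha$ by a second descent or an independent analytic input. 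Moreover the exceptional primes show that $\rank$ really jumps for sporadic $p$, so any complete proof must in addition certify that no further sporadic prime contributes a nontrivial point—an effective-finiteness statement of the same depth as effective Mordell, and precisely where the $abc$ conjecture and the works \cite{Granville}, \cite{Gusic}, \cite{PS} cited above enter. I therefore expect the per-class Selmer computations to be laborious but mechanical, while the true barrier is the passage from a rank bound valid for a fixed residue class to the required statement for all but the explicitly listed finitely many $p$.
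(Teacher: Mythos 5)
You were asked to prove \cref{p=17}, but note that the paper does not prove it either: it is stated as a \emph{conjecture}, supported only by a MAGMA search (no additional rational points with $x$-coordinate of height below $10^5$ for all primes $p<1000$) and by the special cases established in \cref{MT} --- namely $(i,j)=(0,2)$ for all $p$, $(i,j)=(2,2)$ for $p\equiv 3\pmod{4}$, $(i,j)=(2,1)$ for $p\equiv 13\pmod{16}$, and $(i,j)=(2,3)$ for $p\equiv 5\pmod{16}$ --- together with the cases treated in the earlier paper of Hirakawa and the author. So there is no complete argument in the paper against which your attempt can be measured, and a genuine proof of the full statement would be a substantial advance beyond the paper.

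Judged on its own terms, your proposal is not a proof and does not claim to be; its real content is a correct diagnosis of where the difficulty lies, and that diagnosis matches the paper. Your outline of the provable part --- Stoll's descent theorem for the even-$j$ families, and for odd $j$ a Richelot isogeny to $C'^{(p;i,j)}$ followed by Schaefer-style $2$-descent and Grant's Lutz--Nagell theorem --- is exactly the method of \cref{MT}, and your observation that the listed residue classes are precisely those in which the local conditions collapse $\Sel(\mathbb{Q},J)$ onto the image of the rational $2$-torsion agrees with the paper's remark that outside those classes one only obtains $\dim_{\mathbb{F}_2}\Sel(\mathbb{Q},J^{(p;i,j)})\in\{3,4,6\}$, hence a rank bound of $1$, $2$ or $4$ that cannot be closed without separating Mordell--Weil rank from $\Sha$. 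Two cautions on details. First, Grant's theorem does not ``bound the heights and denominators of any rational point'': it constrains only points whose image under the Abel--Jacobi map is torsion, so it completes the argument only after $\rank(J(\mathbb{Q}))=0$ is established, and it yields no finite search in positive rank. Second, your suggestion to invoke Chabauty--Coleman when the Selmer bound is $1$ is a legitimate supplement not used in the paper, but it too would have to be carried out prime by prime, so it cannot by itself deliver the uniform statement for all $p$ --- and the sporadic primes $p=3,5,7,17$, where extra points really exist, show that any complete proof must also certify that no further sporadic prime occurs. Your conclusion that this last step has the depth of an effective finiteness statement (abc-type input) is precisely the paper's own stance; the correct deliverable here is the partial theorem \cref{MT}, not the conjecture itself.
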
 

For each pair $(i,j)$, we have checked that there exist no additional rational points whose height of the $x$-coordinate is less than $10^5$ for $p<1000$.
%fn
\fn{We checked it by MAMGA. The commands are as follows.

(INPUTS)

$>$ P$<$x$>$:=PolynomialRing(Rationals()); 

$>$ for p in [3..1000] do;

$>$ if IsPrime(p) then;

$>$ C:=HyperellipticCurve(x*(x\^{}2+p)*(x\^{}2+2*p)); // (i,j)=(0,1).

$>$ Points(C: Bound:=10\^{}5);

$>$ end if;

$>$ end for;

(OUTPUTS)

\{@ (1 : 0 : 0), (0 : 0 : 1), (8 : -252 : 1), (8 : 252 : 1) @\} for $p=17$,

\{@ (1 : 0 : 0), (0 : 0 : 1) @\} for $p \neq 17$.
}
%end

Note that there exist essentially six infinite families. 
\begin{remark} 
\begin{enumerate}
\item $C^{(p; i+4, j)}$ (resp.\ $C^{(p; i, j+4)}$) is isomorphic to $C^{(p; i, j)}$ via a map which maps $(x, y)$ to $(x/4, y/32)$ (resp.\ $(x/p^2, y/p^5))$. 
Moreover, we have isomorphisms of the following curves over $\mathbb{Q}$:
\fn{The isomorphisms are given by the following:
\begin{align*}
 C^{(p; 1, 3)} \overset{\simeq}{\to} C^{(p; 0, 1)};  \; & (x,y) \mapsto (2p^2/x,2p^2y/x^3).\\
 C^{(p; 1, 1)} \overset{\simeq}{\to} C^{(p; 0, 3)}; \;  & (x,y) \mapsto  (2p^2/x,2p^4y/x^3).\\
 C^{(p; 1, 2)} \overset{\simeq}{\to} C^{(p; 1, 6)}=C^{(p^2; 1, 3)} \overset{\simeq}{\to} C^{(p^2; 0, 1)}= C^{(p; 0, 2)}; \; & (x,y) \mapsto  (p^2x,p^5y) \mapsto (2p^2/x,2p^3y/x^3)\\
C^{(p; 2, 1)} \overset{\simeq}{\to} C^{(p; 3, 3)}; \;  & (x,y) \mapsto  (8p^2/x,32p^4y/x^3).\\
 C^{(p; 2, 3)} \overset{\simeq}{\to} C^{(p; 3, 1)};  \; & (x,y) \mapsto (8p^2/x,32p^2y/x^3).\\
  C^{(p; 2, 2)} \overset{\simeq}{\to} C^{(p; 2, 6)}=C^{(p^2; 2, 3)} \overset{\simeq}{\to} C^{(p^2; 3, 1)}= C^{(p; 3, 2)}; \; & (x,y) \mapsto  (p^2x,p^5y) \mapsto (8p^2/x,32p^3y/x^3).\\
  \end{align*}
} %end
%fn
\begin{enumerate}
\item $C^{(p; 0, 1)} \simeq C^{(p; 1, 3)}$ 
\item $C^{(p; 0, 3)} \simeq C^{(p; 1, 1)}$ 
\item $C^{(p; 0, 2)} \simeq C^{(p; 1, 2)}$ 
\item $C^{(p; 2, 1)} \simeq C^{(p; 3, 3)}$  
\item $C^{(p; 2, 3)} \simeq C^{(p; 3, 1)}$  
\item $C^{(p; 2, 2)} \simeq C^{(p; 3, 2)}$ 
\end{enumerate} 
Our curves in \cite[Theorem 1.1]{HM2} are of the form $(a)$, $(b)$ and $(c)$, 
and our curves in \cref{MT} are of the form $(c)$ and $(f)$.
\item If $j=0$ or $p=2$, we obtain that $\rank (J^{(p;i,j)} (\mathbb{Q}))=0$ by MAGMA.
Therefore, $C^{(p;i,j)}(\mathbb{Q})=\{(0,0), \infty\}$ by \cite[Proposition 3.1]{HM2}. % \cref{AJ}. 
\end{enumerate}
\end{remark}

\cref{MT} proves \cref{p=17} in the following cases.

\begin{enumerate}
\item $(i,j)=(0,2)$.
\item $p \equiv 3 \pmod{4}$ and $(i,j)=(2,2)$.
\item $p \equiv  13 \pmod{16}$ and $(i,j)=(2,1)$.
\item $p \equiv 5 \pmod{16}$ and $(i,j)=(2,3)$.
\end{enumerate}

In the proof of \cite[Theorem 1.1]{HM2}, 
we first proved that the $\mathbb{F}_2$-dimension of the $2$-Selmer group $ \Sel(\mathbb{Q}, J^{(p; i, j)})$
is $2$ by $2$-descent \cite{Stoll2001}. 
Since the $\mathbb{F}_2$-dimension of the $2$-torsion part $J^{(p; i, j)}(\mathbb{Q})[2]$ is also $2$, 
we obtain that $\rank (J^{(p; i, j)}(\mathbb{Q}))=0$.
Next, we determined the set of rational points on $C^{(p; i, j)}$ which map to torsion points 
on $J^{(p; i, j)}$ via the Abel-Jacobi map by the Lutz-Nagell type theorem \cite[Theorem 3]{Grant}. 

On the other hand, computation by MAGMA 
suggests that except for cases $(i,j) = (0,2)$ and $p \equiv \pm 3 \pmod{8}$,
\[\dim_{\mathbb{F}_2} \Sel(\mathbb{Q}, J^{(p; i, j)})= 
\begin{cases}
3 & (p \equiv 3 \pmod{8}\; \mbox{and} \; (i,j)=(2,2)),\\
4 \; \mbox{or} \; 6 & (p \equiv 1 \pmod{8} \; \mbox{and} \; (i,j)=(0,2)),\\
4 & (\mbox{otherwise}). 
\end{cases} \]
 in \cref{MT}.
Thus, we can obtain only an upper bound
\[\rank (J^{(p; i, j)}(\mathbb{Q})) \leq 
\begin{cases}
1 & (p \equiv 3 \pmod{8}\; \mbox{and} \; (i,j)=(2,2)),\\
2 \; \mbox{or} \; 4 & (p \equiv 1 \pmod{8} \; \mbox{and} \; (i,j)=(0,2)),\\
2 & (\mbox{otherwise}). 
\end{cases} \]
in \cref{MT} by $2$-descent for $J^{(p; i, j)}$ itself. 
To prove \cref{MT} (1) and (2), 
we reduce the computations of the sets of rational points of $C^{(p;i,j)}$ 
 to elliptic curves of rank $0$ that are independent of $p$ by using the descent theorem \cite[Theorem 10, Example 9, 10]{Stoll2011}.
In particular, we can reprove \cite[Theorem 1.1 (3), (4)]{HM2} in easier way.

On the other hand, to prove \cref{MT} (3) and (4), we use a Richelot isogeny
combined with $2$-descent and the Lutz-Nagell type theorem.
A Richelot isogeny is a certain isogeny between the Jacobian varieties of hyperelliptic curves of genus $2$ 
(cf. \cite{BM}, \cite{Nicholls}, \cite{Richelot}).
The Jacobian variety $J'^{(p; i, j)}$ of the hyperelliptic curve $C'^{(p; i, j)}$ defined by
\[y^2=x(x^2-2^{i+2}p^j)(x^2-2^{i+3}p^j)\]
is isogenous to the Jacobian variety $J^{(p; i, j)}$ of $C^{(p; i, j)}$ by \cref{R-isog}.
Therefore, to prove that the Mordell-Weil rank of $J^{(p; i, j)}$ is $0$,
it is sufficient to prove that the Mordell-Weil rank of the Jacobian variety $J'^{(p; i, j)}$ of $C'^{(p; i, j)}$ is $0$.
We carry out this task by the standard $2$-descent argument \cite{Schaefer98}.
Then, we determine the set of rational points on $C^{(p; i, j)}$ which map to torsion points 
on $J^{(p; i, j)}$ via the Abel-Jacobi map by the Lutz-Nagell type theorem \cite[Theorem 3]{Grant}.  

In $\S 2$, we prove \cref{MT} (1) and (2) by determining the sets of rational points of certain elliptic curves.
In $\S 3$, we prove \cref{MT} (3) and (4). In $\S 3.1$, we outline the proof. 
Then, in $\S 3.2$ and $\S 3.3$, we prove that the Mordell-Weil rank of the Jacobian variety 
$J'^{(p; i, j)}$ of $C'^{(p; i, j)}$ is $0$ by $2$-descent \cite{Schaefer98}.

%%%%%
%%%%% Proof
%%%%%

\section{Proof of \cref{MT} (1), (2)}
In this section, we reduce the computation of $C^{(p;i,j)}$ for $j=2$ to elliptic curves
by the descent theorem, and prove \cref{MT} (1) and (2).

\subsection{Case (1): $(i,j)=(0,2)$}

\begin{proof} [Proof of \cref{MT} $(1)$]
Consider
\[ C^{(p;0,2)} : y^2 =x(x^2 +p^2)(x^2 +2p^2).\]
Let $(x,y)=(U/W, V/W^3)$, $U$, $V$, $W \in \mathbb{Z}$. 
Then,
\[V^2=UW(U^2 + p^2W^2)(U^2 + 2p^2W^2).\]
We may assume that $(U,W)=1$.

\begin{enumerate}
\item Case I: $p \nmid U$. 

We claim that 
\[
d:=\gcd(UW,(U^2 + p^2W^2)(U^2 + 2p^2W^2))=2^l.\]
for some $l \in \mathbb{Z}_{\geq 0}$.
Indeed, if there is a prime number $q \neq 2$  such that $q \mid d$, then $q \mid U$ or $q \mid W$.
If $q  \mid U$, then $q \neq p$ by $p \nmid U$, and we have
\[0 \equiv (U^2+p^2W)(U^2+2p^2W) \equiv 2p^4W^2 \pmod{q}. \]
This implies that $q \mid W$, which contradicts $(U,W)=1$.
If $q  \mid W$, then 
\[0 \equiv (U^2+p^2W)(U^2+2p^2W) \equiv U^4 \pmod{q}. \]
This implies that $q \mid U$, which contradicts $(U,W)=1$.
If $p \mid \delta$, then
\[0 \equiv (U^2+p^2W)(U^2+2p^2W) \equiv U^4 \pmod{p}. \]
This implies that $p \mid U$, which is a contradiction.

Therefore, we have
\[\begin{cases}
\delta s^2=UW, \\
\delta t^2= (U^2+p^2W^2)(U^2+2p^2W^2) 
\end{cases}\]
with $\delta=1$ or $2$.

If $\delta=1$, then we have
\begin{align*} \label{EC1-1}
\left(\frac{t}{U^2} \right)^2=2 \left(\frac{pW}{U} \right)^4+3 \left(\frac{pW}{U} \right)^2+1. 
\end{align*}
By \cite[Proposition 1.2.1]{Connell}, %p. 105
if we let 
\begin{align*}
x &:= \frac{2 (\frac{t}{U^2}+1)}{(\frac{pW}{U})^2}, \\
y &:= \frac{4 (\frac{t}{U^2}+1)+6 (\frac{pW}{U})^2}{(\frac{pW}{U})^3}, \\
%a_1 &:= 0,\\
%a_2 &:= c\\
%a_3 &:= 0, \\
%a_4 &:= -4q^2a \\
%a_5 &:= 0, \\
%a_6 &:= a_2a_4=-4q^2ac,
\end{align*}
then we obtain the following cubic model:
\[E_1: y^2=x^3+3x^2-8x-24. \]
By MAGMA, we have $\rank (E_1(\mathbb{Q}))=0$ and $E_1(\mathbb{Q})_{\tors} \simeq \mathbb{Z}/2\mathbb{Z}$.
Since $(-3,0)$, $\infty \in E_1(\mathbb{Q})$, we conclude that
\[E_1(\mathbb{Q})=\{(-3,0), \infty \}. \]
The rational points on $E_1$ correspond to $ \infty \in C^{(p;0,2)}(\mathbb{Q})$.

If $\delta=2$, then we have
\begin{align*} \label{EC1-2}
\left(\frac{t}{(pW)^2} \right)^2=\frac{1}{2} \left(\frac{U}{pW} \right)^4+\frac{3}{2} \left(\frac{U}{pW} \right)^2+1. 
\end{align*}
By \cite[Proposition 1.2.1]{Connell}, %p. 105
if we let 
\begin{align*}
x &:= \frac{2 (\frac{t}{(pW)^2}+1)}{(\frac{U}{pW})^2}, \\
y &:= \frac{4 (\frac{t}{(pW)^2}+1)+3 (\frac{U}{pW})^2}{(\frac{U}{pW})^3}, \\
%a_1 &:= 0,\\
%a_2 &:= c\\
%a_3 &:= 0, \\
%a_4 &:= -4q^2a \\
%a_5 &:= 0, \\
%a_6 &:= a_2a_4=-4q^2ac,
\end{align*}
then we obtain the following cubic model:
\[E_2: y^2=x^3+\frac{3}{2}x^2-2x-3. \]
By MAGMA, we have $\rank (E_2(\mathbb{Q}))=0$ and $E_2(\mathbb{Q})_{\tors} \simeq \mathbb{Z}/2\mathbb{Z}$.
Thus, we conclude that
\[E_2(\mathbb{Q})=\{(-3/2,0), \infty \}. \]
The rational points on $E_2$ correspond to  $(0,0) \in C^{(p;0,2)}(\mathbb{Q})$.

\item Case II: $p \mid U$ (Thus, $p \nmid W$).

In this case, we have $p^3 \mid V$. Thus, $(U,V,W)=(pu,p^3v,w)$ with $(u,w)=1$, $p \nmid w$, and
\[pv^2=uw(u^2+w^2)(u^2+2w^2). \]

\begin{enumerate}

\item $p \mid uw$. 

As in case I, we know that
\[\gcd(uw,(u^2 + w^2)(u^2 + 2w^2)) =2^l.\]
for some $l \in \mathbb{Z}_{\geq 0}$.
Therefore we have
\[\begin{cases}
p \delta s^2=uw, \\
   \delta t^2= (u^2+w^2)(u^2+2w^2) 
\end{cases}\]
with $\delta=1$ or $2$.
These elliptic curves have rank $0$, and their rational points 
correspond to obvious ones $(0,0)$, $\infty \in C^{(p;0,2)}(\mathbb{Q})$ as in Case I.

\item $p \nmid uw$. 
%Note that congruent condition of $p$ is used only here.} 
\begin{enumerate}
\item Suppose that $p \equiv 7 \pmod{8}$.
Then, $u^2+w^2 \equiv 0 \pmod{p}$ or $u^2+2w^2 \equiv 0 \pmod{p}$,
which contradicts that $-1$ and $-2$ are quadratic non-residues modulo $p$. 

\item If $p \equiv 3 \pmod{8}$, then we can reprove \cite[Theorem 1.1 (3)]{HM2} in the above way.

Since $-2$ is a quadratic residue modulo $p$ and $-1$ is a quadratic non-residue modulo $p$, we have
\[\begin{cases}
p \delta s^2=u^2+2w^2, \\
   \delta t^2= uw(u^2+w^2)
\end{cases}\]
with $\delta=1$ or $2$. 

If $\delta=1$, then the second equation implies that
\[t^2= uw(u^2+w^2).\]
Since $w \neq 0$, we have
\[E_1: t'^2= u'(u'^2+1),\]
where we let $u'=u/w$ and $t'=t/w^2$.
By MAGMA, we have $\rank (E_1(\mathbb{Q}))=0$ and $E_1(\mathbb{Q})_{\tors} \simeq \mathbb{Z}/2\mathbb{Z}$.
Thus, we conclude that
\[E_1(\mathbb{Q})=\{(0,0), \infty \}. \]
$(0,0) \in E_1(\mathbb{Q})$ corresponds to 
 $(0,0) \in C^{(p;0,2)}(\mathbb{Q})$, and
$\infty \in E_1(\mathbb{Q})$ corresponds to $\infty \in C^{(p;0,2)}(\mathbb{Q})$.

If $\delta=2$, then the second equation implies that
\[2t^2= uw(u^2+w^2).\]
Since $w \neq 0$, we have
\[E_2: t'^2= u'(u'^2+1/4),\]
where we let $u'=u/2w$ and $t'=t/2w^2$.
By MAGMA, we have $\rank (E_2(\mathbb{Q}))=0$ and $E_2(\mathbb{Q})_{\tors} \simeq \mathbb{Z}/4\mathbb{Z}$.
Thus, we conclude that
\[E_2(\mathbb{Q})=\left\{\left(0,0 \right),\left(\frac{1}{2},\pm \frac{1}{2} \right), \infty \right\}. \]
$(0,0) \in E_2(\mathbb{Q})$ corresponds to 
$(0,0) \in C^{(p;0,2)}(\mathbb{Q})$ and 
$\infty \in E_2(\mathbb{Q})$ corresponds to $\infty \in C^{(p;0,2)}(\mathbb{Q})$.
$\left(\frac{1}{2},\pm \frac{1}{2} \right) \in E_2(\mathbb{Q})$ corresponds to $u=w$ i.e. $x=U/W=pu/u=p$. In this case, we have
\[y^2=p(x^2+p^2)(x^2+2p^2)=6p^5.\]
Since $6p \not\in \mathbb{Q}^2$, this is a contradiction.

\item
If $p \equiv -3 \pmod{8}$, then we can also reprove \cite[Theorem 1.1 (4)]{HM2} in the above way.

Since $-1$ is a quadratic residue modulo $p$ and $-2$ is a quadratic non-residue modulo $p$, we have
\[\begin{cases}
p \delta s^2=u^2+w^2, \\
   \delta t^2= uw(u^2+2w^2)
\end{cases}\]
with $\delta=1$ or $2$. 
In this case, we can prove that $\delta=1$.
Indeed, since $(u,w)=1$, either $u$ or $w$ is odd. %fn
%\fn{Cf. If $\delta=2$, then $rank$=1$ by MAGMA.
%Since we were able to prove $\delta=1$, we were able to reduce the proof to elliptic curves of $rank$=0$ (that are independent of $p$).} %end
If $u$ is even, then $w$ is odd and $u^2+w^2$ is odd. Thus, $\delta=1$.
If $u$ is odd and $w$ is even, then $u^2+w^2$ is odd. Thus, $\delta=1$.
If $u$ and $w$ are odd, then $uw(u^2+2w^2)$ is odd. Thus, $\delta=1$.

Therefore, we have
\[\begin{cases}
p s^2=u^2+w^2, \\
 t^2= uw(u^2+2w^2).
\end{cases}\]
By $u'=u/w$ and $t'=t/w^2$, we have the elliptic curve
As the above argument, we have
\[E_2: t'^2= u'(u'^2+2),\]
where we let $u'=u/2w$ and $t'=t/w^2$.
By MAGMA, we have $\rank (E(\mathbb{Q}))=0$ and $E(\mathbb{Q})_{\tors} \simeq \mathbb{Z}/2\mathbb{Z}$.
Thus, we conclude that
\[E(\mathbb{Q})=\{(0,0), \infty \}. \]
$(0,0) \in E(\mathbb{Q})$ corresponds to $(0,0) \in C^{(p;0,2)}(\mathbb{Q})$, and
$\infty \in E(\mathbb{Q})$ corresponds to $\infty \in C^{(p;0,2)}(\mathbb{Q})$.

\item If $p \equiv 1 \pmod{8}$, then we can also prove the same result in the above way.

Note that $-2$ and $-1$ are quadratic residue modulo $p$.
Since $p \nmid w$, we have $p \nmid (u^2+w^2)$ or $p \nmid (u^2+2w^2)$.

If $p \mid u^2+2w^2$ and $p \nmid u^2+w^2$, 
%Since $p \nmid w$, we have $p \nmid (u^2+w^2)$ or $p \nmid (u^2+2w^2)$.
then we have
\[\begin{cases}
p \delta s^2=u^2+2w^2, \\
   \delta t^2= uw(u^2+w^2)
\end{cases}\]
with $\delta=1$ or $2$, and
we can prove the result in the same way as case (ii).

If $p \mid u^2+w^2$ and $p \nmid u^2+2w^2$, then we have
\[\begin{cases}
p  s^2=u^2+w^2, \\
    t^2= uw(u^2+2w^2),
\end{cases}\]
and we can prove the result in the same way as case (iii).
\end{enumerate}
\end{enumerate}
\end{enumerate}
\end{proof}

\subsection{Case (2): $p \equiv 3 \pmod {4}$ and $(i,j)=(2,2)$}

\begin{proof} [Proof of \cref{MT} $(2)$]
Consider
\[ C^{(p;2,2)} : y^2 =x(x^2 +4p^2)(x^2 +8p^2),\; % p \equiv 7 \pmod{8}
 p \equiv 3 \pmod{4}. \]
Suppose that we have a rational point $(x,y) \in C^{(p;2,2)}(\mathbb{Q}) \setminus \{(0,0), \infty \}$.
Let $(x,y)=(U/W, V/W^3)$, $U$, $V$, $W \in \mathbb{Z}$. Then,
\[V^2=UW(U^2 + 4p^2W^2)(U^2 + 8p^2W^2).\]
We may assume that $(U,W)=1$.

\begin{enumerate}
\item Case I: $p \nmid U$.

As in the proof of \cref{MT} (1), we have
 \[\delta:=\gcd(UW,(U^2 + 4p^2W^2)(U^2 + 8p^2W^2)) =2^l\]
for some $l \in \mathbb{Z}_{\geq 0}$. Thus, we have
\[\begin{cases}
\delta s^2=UW, \\
\delta t^2= (U^2+4p^2W^2)(U^2+8p^2W^2) 
\end{cases}\]
with $\delta=1$ or $2$.
If $\delta=1$, then
\begin{align*} \label{EC2-1}
\left(\frac{t}{U^2} \right)^2=2 \left(\frac{2pW}{U} \right)^4+3 \left(\frac{2pW}{U} \right)^2+1. 
\end{align*}
By \cite[Proposition 1.2.1]{Connell}, %p. 105
if we let 
\begin{align*}
x &:= \frac{2 (\frac{t}{U^2}+1)}{(\frac{2pW}{U})^2}, \\
y &:= \frac{4 (\frac{t}{U^2}+1)+6 (\frac{2pW}{U})^2}{(\frac{2pW}{U})^3}, \\
%a_1 &:= 0,\\
%a_2 &:= c\\
%a_3 &:= 0, \\
%a_4 &:= -4q^2a \\
%a_5 &:= 0, \\
%a_6 &:= a_2a_4=-4q^2ac,
\end{align*}
then we obtain the following cubic model:
\[E_1: y^2=x^3+3x^2-8x-24. \]
By MAGMA, we have $\rank (E_1(\mathbb{Q}))=0$ and $E_1(\mathbb{Q})_{\tors} \simeq \mathbb{Z}/2\mathbb{Z}$.
Since $(-3,0)$, $\infty \in E_1(\mathbb{Q})$, we conclude that
\[E_1(\mathbb{Q})=\{(-3,0), \infty \}. \]
The rational points on $E_1$ correspond to $\infty \in C^{(p;2,2)}(\mathbb{Q})$.

If $\delta=2$, then
\begin{align*} \label{EC2-2}
\left(\frac{t}{(2pW)^2} \right)^2=\frac{1}{2} \left(\frac{U}{2pW} \right)^4+\frac{3}{2} \left(\frac{U}{2pW} \right)^2+1. 
\end{align*}
By \cite[Proposition 1.2.1]{Connell}, %p. 105
if we let 
\begin{align*}
x &:= \frac{2 (\frac{t}{(2pW)^2}+1)}{(\frac{U}{2pW})^2}, \\
y &:= \frac{4 (\frac{t}{(2pW)^2}+1)+3 (\frac{U}{2pW})^2}{(\frac{U}{2pW})^3}, \\
%a_1 &:= 0,\\
%a_2 &:= c\\
%a_3 &:= 0, \\
%a_4 &:= -4q^2a \\
%a_5 &:= 0, \\
%a_6 &:= a_2a_4=-4q^2ac,
\end{align*}
then we obtain the following cubic model:
\[E_2: y^2=x^3+\frac{3}{2}x^2-2x-3. \]
By MAGMA, we have $\rank (E_2(\mathbb{Q}))=0$ and $E_2(\mathbb{Q})_{\tors} \simeq \mathbb{Z}/2\mathbb{Z}$.
Since $(-3/2,0)$, $\infty \in E_2(\mathbb{Q})$, we conclude that
\[E_2(\mathbb{Q})=\{(-3/2,0), \infty \}. \]
The rational points on $E_2$ corresponds to $(0,0) \in C^{(p;2,2)}(\mathbb{Q})$.

\item Case II: $p \mid U$ (Thus, $p \nmid W$).

In this case, we have $p^3 \mid V$. Thus, $(U,V,W)=(pu,p^3v,w)$ with $(u,w)=1$, $p \nmid w$, and
\[pv^2=uw(u^2+4w^2)(u^2+8w^2). \]

\begin{enumerate}
\item $p \mid uw$.

As in case I, we know that
\[\delta:=\gcd(uw,(u^2 + 4w^2)(u^2 + 8w^2))=2^l.\]
for some $l \in \mathbb{Z}_{\geq 0}$.
Therefore we have
\[\begin{cases}
p \delta s^2=uw, \\
   \delta t^2= (u^2+4w^2)(u^2+8w^2) 
\end{cases}\]
with $\delta=1$ or $2$.
As above, we consider elliptic curves with rank $0$, and their rational points correspond to obvious ones $(0,0)$, $\infty \in C^{(p;2,2)}(\mathbb{Q})$ as in Case I.

\item $p \nmid uw$.%fn
\fn{Note that we use the congruent condition of $p$ only here.} %end

\begin{enumerate}
\item Suppose that $p \equiv 7 \pmod{8}$.

Then, $u^2+4w^2 \equiv 0 \pmod{p}$ or $u^2+8w^2 \equiv 0 \pmod{p}$,
which contradicts that $-1$ and $-2$ are quadratic non-residues modulo $p$. 

\item If $p \equiv 3 \pmod{8}$, then we can determine the sets of rational points of $C^{(p;2,2)}$ in the same way.

Since $-2$ is a quadratic residue modulo $p$ and $-1$ is a quadratic non-residue modulo $p$, we have
\[\begin{cases}
p \delta s^2=u^2+8w^2, \\
   \delta t^2= uw(u^2+4w^2)
\end{cases}\]
with $\delta=1$ or $2$. 

If $\delta=1$, then the second equation implies that
\[t^2= uw(u^2+4w^2).\]
Since $w \neq 0$, we have
\[E_1: t'^2= u'(u'^2+4),\]
where we let $u'=u/w$ and $t'=t/w^2$.
By MAGMA, we have $\rank (E_1(\mathbb{Q}))=0$ and $E_1(\mathbb{Q})_{\tors} \simeq \mathbb{Z}/4\mathbb{Z}$.
Thus, we conclude that
\[E_1(\mathbb{Q})=\{(0,0), (2, \pm 4), \infty \}. \]
$(0,0) \in E_1(\mathbb{Q})$ corresponds to $(0,0) \in C^{(p;2,2)}(\mathbb{Q})$, and
$\infty \in E_1(\mathbb{Q})$ corresponds to $\infty \in C^{(p;2,2)}(\mathbb{Q})$.
$(2,\pm 4) \in E_1(\mathbb{Q})$ corresponds to $u=2w$, i.e. $x=pu/w=2p$.
By $p s^2=u^2+8w^2=12w^2$, $s \in \mathbb{Q}$ if and only if $p=3$.
In this case, $x=6$ and this corresponds to $(6, \pm 216) \in C^{(p;2,2)}(\mathbb{Q})$.

If $\delta=2$, then the second equation implies that
\[2t^2= uw(u^2+4w^2).\]
Since $w \neq 0$, we have
\[E_2: t'^2= u'(u'^2+1),\]
where we let $u'=u/2w$ and $t'=t/2w^2$.
By MAGMA, we have $\rank (E_2(\mathbb{Q}))=0$ and $E_2(\mathbb{Q})_{\tors} \simeq \mathbb{Z}/2\mathbb{Z}$.
Thus, we conclude that
\[E_2(\mathbb{Q})=\{(0,0), \infty \}. \]
$(0,0) \in E_2(\mathbb{Q})$ corresponds to $(0,0) \in C^{(p;2,2)}(\mathbb{Q})$ and 
$\infty \in E_2(\mathbb{Q})$ corresponds to $\infty \in C^{(p;2,2)}(\mathbb{Q})$.
\end{enumerate}
\end{enumerate}
\end{enumerate}
\end{proof}

%%%%%
%%%%% Section of  the $2$-descent
%%%%%

\section{$2$-descent}
In this section, we prove \cref{MT} (3) and (4) by using a Richelot isogeny, 
$2$-descent and the Lutz-Nagell type theorem.
%Note that the tools of the proof of this section are different from those of the previous sections.

Let $p$ be a prime number, $i, j \in {\mathbb{Z}}$, and $f(x)=x(x^2+2^ip^{j})(x^2+2^{i+1}p^{j})$.
Let $ C^{(p; i, j)}$ be the hyperelliptic curve defined by $y^2=f(x)$ and $J^{(p; i, j)}$ be its Jacobian variety.
First, we prove the following theorem by $2$-descent.
%Key lemma:
\begin{theorem} \label{rank=0}
Suppose that one of the following conditions holds.
\begin{enumerate}
\item $p \equiv 13 \pmod{16}$ and $(i,j)=(2,1)$.
\item $p \equiv 5 \pmod{16}$ and $(i,j)=(2,3)$.
\end{enumerate}
Then, we have $\rank(J^{(p; i, j)}(\mathbb{Q})) = 0$.
\end{theorem}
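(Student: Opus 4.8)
The plan is to exploit the Richelot isogeny of \cref{R-isog}. Since $J'^{(p; i, j)}$ is isogenous to $J^{(p; i, j)}$ over $\mathbb{Q}$ and the Mordell--Weil rank is an isogeny invariant, it suffices to prove $\rank(J'^{(p; i, j)}(\mathbb{Q})) = 0$, where $C'^{(p; i, j)}$ is defined by $y^2 = x(x^2 - 2^{i+2}p^j)(x^2 - 2^{i+3}p^j)$. A useful preliminary observation is that in both cases the relevant \'etale algebra has the same shape: writing $f'(x) = x\,g_1(x)\,g_2(x)$ with $g_1 = x^2 - 2^{i+2}p^j$ and $g_2 = x^2 - 2^{i+3}p^j$, one has $L := \mathbb{Q}[x]/(f'(x)) \cong \mathbb{Q} \times K_1 \times K_2$ with $K_1 = \mathbb{Q}(\sqrt{p})$ and $K_2 = \mathbb{Q}(\sqrt{2p})$ for both $(i,j) = (2,1)$ and $(i,j) = (2,3)$, since $16p$, $16p^3$ and $32p$, $32p^3$ differ from $p$, $2p$ by squares. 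Thus the two cases reduce to a single descent computation, the congruence hypotheses entering only through the local analysis.

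Next I would run the standard $2$-descent of Schaefer (\cite{Schaefer98}) for the genus-$2$ curve $C'^{(p; i, j)}$. Via the ``$x - T$'' homomorphism one embeds $J'(\mathbb{Q})/2J'(\mathbb{Q})$ into the $2$-Selmer group $\Sel(\mathbb{Q}, J')$, realized as the subgroup of $L^*/(L^*)^2$ consisting of classes that (i) have square norm in $\mathbb{Q}^*/(\mathbb{Q}^*)^2$ and (ii) lie everywhere locally in the image of the local descent maps. The Galois action on the five roots (one rational, namely $0$, together with two conjugate pairs over $g_1$ and $g_2$) shows that $J'(\mathbb{Q})[2]$ is spanned over $\mathbb{F}_2$ by the two even-degree Weierstrass divisors supported over $g_1$ and over $g_2$, so $\dim_{\mathbb{F}_2} J'(\mathbb{Q})[2] = 2$. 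By the exact sequence $0 \to J'(\mathbb{Q})/2J'(\mathbb{Q}) \to \Sel(\mathbb{Q}, J') \to \Sha(J')[2] \to 0$ it therefore suffices to prove $\dim_{\mathbb{F}_2} \Sel(\mathbb{Q}, J') = 2$, for then $\rank(J'(\mathbb{Q})) \leq \dim_{\mathbb{F}_2}\Sel(\mathbb{Q}, J') - \dim_{\mathbb{F}_2} J'(\mathbb{Q})[2] = 0$.

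To compute the Selmer group I would first bound it inside the $S$-Selmer group of $L$, i.e.\ the subgroup of $L^*/(L^*)^2$ unramified outside $S = \{2, p\}$, while controlling the contribution of the $2$-parts of the $S$-class groups of $K_1 = \mathbb{Q}(\sqrt{p})$ and $K_2 = \mathbb{Q}(\sqrt{2p})$. Then I would impose the local conditions at the bad places $v \in \{2, p, \infty\}$: at each such $v$ I compute $J'(\mathbb{Q}_v)/2J'(\mathbb{Q}_v)$ and its image in $(L \otimes \mathbb{Q}_v)^*$ modulo squares, and intersect the resulting conditions over $v$. Here the congruences $p \equiv 13 \pmod{16}$ (resp.\ $p \equiv 5 \pmod{16}$) are used to fix the symbols $\left(\frac{2}{p}\right)$ and $\left(\frac{-1}{p}\right)$ and the splitting type of $2$ and $p$ in $K_1, K_2$; note that both hypotheses give $p \equiv 5 \pmod{8}$, so that $2$ is inert in $\mathbb{Q}(\sqrt{p})$ and ramified in $\mathbb{Q}(\sqrt{2p})$, which in turn pins down the relevant Hilbert symbols and local images. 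The outcome I expect is that the only classes surviving all local conditions are the two coming from $J'(\mathbb{Q})[2]$, giving $\dim_{\mathbb{F}_2} \Sel(\mathbb{Q}, J') = 2$.

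The main obstacle will be the analysis at $v = 2$. Since $2$ ramifies in $K_2 = \mathbb{Q}(\sqrt{2p})$, the structure of $(L \otimes \mathbb{Q}_2)^*$ modulo squares and the image of $J'(\mathbb{Q}_2)/2J'(\mathbb{Q}_2)$ inside it are genuinely delicate, and it is precisely to control this $2$-adic behaviour that the hypotheses must be sharpened from a condition modulo $8$ to a condition modulo $16$. Checking that the norm condition together with the local condition at $2$, and their compatibility with the conditions at $p$ and $\infty$, leaves no class beyond the two torsion ones is the crux of the argument; by contrast the computations at the odd prime $p$ and at the real place amount to comparatively routine quadratic-residue and sign bookkeeping.
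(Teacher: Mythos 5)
Your proposal takes essentially the same route as the paper's proof: pass to $C'^{(p;i,j)}$ via the Richelot isogeny (\cref{R-isog}), note that both cases give $L\simeq\mathbb{Q}\times\mathbb{Q}(\sqrt{p})\times\mathbb{Q}(\sqrt{2p})$, and run Schaefer's $2$-descent with local conditions at $S=\{2,p,\infty\}$ to conclude $\dim_{\mathbb{F}_2}\Sel(\mathbb{Q},J')=2=\dim_{\mathbb{F}_2}J'(\mathbb{Q})[2]$, hence $\rank(J^{(p;i,j)}(\mathbb{Q}))=0$. The computations you leave as expected outcomes are exactly what \cref{LS2,Jv,Sel2} supply (together with the unit lemmas \cref{Norm,fu}), including the case distinction modulo $32$ at $v=2$ that you rightly identify as the crux.
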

We treat the above two cases separately but in a similar manner in the following two subsections respectively.
We prove \cref{rank=0} by considering the Jacobian variety of another hyperelliptic curve $C'^{(p;i,j)}$ whose Jacobian variety is isogenous to that of $C^{(p;i,j)}$.  
\begin{theorem}  $($\cite{BM}, \cite[Theorem 5.7.4]{Nicholls}, \cite{Richelot}$)$ 
Let $C$ be the hyperelliptic curve over $\mathbb{Q}$ defined by $y^2=G_1(x)G_2(x)G_3(x)$, 
where $G_i(x)=g_{i2}x^2+g_{i1}x+g_{i0}$ and $\Delta:=\mathrm{det}(g_{ij})$.
Suppose that $\Delta \neq 0$, and let $C'$ be the hyperelliptic curve defined by the following equation.
\[ \Delta y^2=(G'_2G_3-G_2G'_3)(G'_3G_1-G_3G'_1)(G'_1G_2-G_1G'_2).\]
Here, $G'_i(x)$ denotes the derivative of $G_i(x)$.
Then, $J_C:=\mathrm{Jac}(C)$ and $J_{C'}:=\mathrm{Jac}(C')$ are isogenous over $\mathbb{Q}$.
In particular, 
\[\mathrm{rank}(J_C (\mathbb{Q}))=\mathrm{rank}(J_{C'}(\mathbb{Q})).\]
\end{theorem}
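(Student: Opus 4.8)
\section*{Proof proposal}

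Since the statement is the classical Richelot isogeny theorem, the plan is to reconstruct the standard proof: realize the isogeny as the quotient of $J_C$ by a maximal isotropic subgroup of $J_C[2]$, identify the quotient with $J_{C'}$ via the explicit Richelot data, and then pass to rational points. First I would record the one elementary observation needed to make the statement well posed: writing $L_1 = G'_2 G_3 - G_2 G'_3$, $L_2 = G'_3 G_1 - G_3 G'_1$, $L_3 = G'_1 G_2 - G_1 G'_2$, the cubic leading terms of $G'_j G_k$ and $G_j G'_k$ cancel because each $G_i$ is quadratic, so every $L_i$ is again of degree at most $2$. Hence $\Delta y^2 = L_1 L_2 L_3$ is a genuine sextic genus-$2$ model and $C'$ makes sense.

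Next I would construct the isogeny. Work with the degree-$6$ model $y^2 = G_1 G_2 G_3$, whose six affine Weierstrass points are the roots of the three quadratics, two lying over each $G_i$. Identifying $J_C[2]$ with the $\mathbb{F}_2$-space of even subsets of the Weierstrass set modulo complementation, let $e_i \in J_C[2]$ be the class of the pair of roots of $G_i$. Then $e_1 + e_2 + e_3 = 0$, so $\Sigma := \{0, e_1, e_2, e_3\}$ is a subgroup isomorphic to $(\mathbb{Z}/2\mathbb{Z})^2$. Because the root pairs of distinct $G_i$ are disjoint, $\langle e_i, e_j \rangle = 1$ under the Weil pairing, so $\Sigma$ is maximal isotropic. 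Crucially, although $\Gal(\overline{\mathbb{Q}}/\mathbb{Q})$ may permute the two roots inside each $G_i$, it fixes the partition $\{G_1, G_2, G_3\}$, hence fixes each $e_i$ and therefore $\Sigma$; so $\Sigma$ is defined over $\mathbb{Q}$. Quotienting by a Lagrangian subgroup, the principal polarization of $J_C$ descends, and $A := J_C/\Sigma$ is a principally polarized abelian surface over $\mathbb{Q}$ together with a $(2,2)$-isogeny $\phi : J_C \to A$ defined over $\mathbb{Q}$.

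The heart of the argument, and the step I expect to be the main obstacle, is to identify $A$ with $J_{C'}$ together with the precise model $\Delta y^2 = L_1 L_2 L_3$ and the exact factor $\Delta$. By Weil's classification a principally polarized abelian surface is either the Jacobian of a smooth genus-$2$ curve or a polarized product of elliptic curves, and in the generic case $A = \mathrm{Jac}(C')$ for a curve $C'$. Pinning down $C'$ is exactly the classical Richelot computation: one exhibits the correspondence $\mathcal{T} \subset C \times C'$ cut out by a bilinear relation of the form $G_1(x)L_1(u) + G_2(x)L_2(u) + G_3(x)L_3(u) = 0$ in the $x$- and $u$-coordinates, together with the matching relation on the hyperelliptic involutions, and checks that pullback--pushforward of divisor classes along $\mathcal{T}$ realizes $\phi$ and identifies $A$ with the Jacobian of $\Delta y^2 = L_1 L_2 L_3$. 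This bookkeeping, carried out through theta functions in Richelot and in \cite{BM} and through the explicit model in \cite{Nicholls}, is where the quadratics $L_i$ and the determinant $\Delta$ emerge; it is the only genuinely computational part, and I would quote it from the cited references rather than redo the invariant theory.

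Finally, the rank statement follows formally. The $(2,2)$-isogeny $\phi : J_C \to J_{C'}$ over $\mathbb{Q}$ admits a dual isogeny $\psi$ with $\psi \circ \phi = [2]$ and $\phi \circ \psi = [2]$, so $\phi$ has finite kernel. After tensoring the Mordell--Weil groups with $\mathbb{Q}$, the torsion kernel dies, so the induced map $\phi : J_C(\mathbb{Q}) \otimes_{\mathbb{Z}} \mathbb{Q} \to J_{C'}(\mathbb{Q}) \otimes_{\mathbb{Z}} \mathbb{Q}$ is injective, and the relations $\psi \circ \phi = \phi \circ \psi = [2]$ force it to be surjective as well, with inverse $\tfrac{1}{2}\psi$. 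Hence it is an isomorphism of finite-dimensional $\mathbb{Q}$-vector spaces, and comparing dimensions gives $\rank(J_C(\mathbb{Q})) = \rank(J_{C'}(\mathbb{Q}))$, as claimed.
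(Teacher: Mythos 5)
The paper offers no proof of this theorem at all: it is imported verbatim from the three cited references and used only to deduce the corollary that $J^{(p;i,j)}$ and $J'^{(p;i,j)}$ are isogenous, so there is no in-paper argument to compare yours against. What your proposal does is reconstruct the standard proof from the literature, and it does so correctly: the cancellation of cubic terms showing each $L_i$ has degree at most $2$; the identification of $\Sigma=\{0,e_1,e_2,e_3\}$ as a Galois-stable maximal isotropic subgroup of $J_C[2]$ (even subsets of the six Weierstrass points modulo complementation, with Weil pairing $(-1)^{\lvert S\cap T\rvert}$, so disjoint pairs pair trivially); the descent of the principal polarization to the quotient $J_C/\Sigma$; and the formal deduction of the rank equality from $\psi\circ\phi=[2]$ and $\phi\circ\psi=[2]$ after tensoring the Mordell--Weil groups with $\mathbb{Q}$. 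The one step you defer --- identifying $J_C/\Sigma$ with the Jacobian of the explicit model $\Delta y^2=L_1L_2L_3$ via the Richelot correspondence --- is precisely the computational content of the cited references, so quoting it puts you on the same footing as the paper itself, which quotes the entire theorem. Two minor remarks: your hedge ``in the generic case'' is unnecessary, since the hypothesis $\Delta\neq 0$ is exactly the condition guaranteeing that $L_1L_2L_3$ is separable, so $C'$ is a smooth genus-$2$ curve and the identification holds unconditionally; and since the correspondence on $C\times C'$ induces the isogeny directly, the appeal to Weil's classification of principally polarized abelian surfaces can be dispensed with entirely.
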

By this theorem, we obtain a defining equation of our $C'^{(p;i,j)}$.
\begin{corollary} \label{R-isog}
The Jacobian variety of $C^{(p;i,j)}$ is isogenous to that of $C'^{(p;i,j)}$ defined by the following equation.
\[y^2=x(x^2-2^{i+2}p^j)(x^2-2^{i+3}p^j).\]
\end{corollary}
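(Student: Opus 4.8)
The plan is to apply the Richelot isogeny theorem directly to $C^{(p;i,j)}$ and then rescale the resulting curve into the claimed normal form. Since $f(x)=x(x^2+2^ip^{j})(x^2+2^{i+1}p^{j})$ has degree $5$, I would factor it as $f=G_1G_2G_3$ with one degenerate (linear) quadratic factor, taking
\[
G_1(x)=x,\qquad G_2(x)=x^2+2^ip^{j},\qquad G_3(x)=x^2+2^{i+1}p^{j},
\]
so that the coefficient triples $(g_{i2},g_{i1},g_{i0})$ are $(0,1,0)$, $(1,0,2^ip^{j})$, $(1,0,2^{i+1}p^{j})$. A short determinant computation then gives $\Delta=-2^ip^{j}\neq 0$, so the hypotheses of the theorem are satisfied and the degenerate case $g_{12}=0$ is permitted.

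Next I would compute the three ``cross'' factors in the defining equation of the Richelot dual. Using $G_1'=1$ and $G_2'=G_3'=2x$, one finds
\[
G_2'G_3-G_2G_3'=2^{i+1}p^{j}\,x,\qquad G_3'G_1-G_3G_1'=x^2-2^{i+1}p^{j},\qquad G_1'G_2-G_1G_2'=-(x^2-2^ip^{j}).
\]
Multiplying these and dividing by $\Delta$, the equation $\Delta y^2=(G_2'G_3-G_2G_3')(G_3'G_1-G_3G_1')(G_1'G_2-G_1G_2')$ becomes
\[
y^2=2\,x(x^2-2^ip^{j})(x^2-2^{i+1}p^{j}).
\]
The sign ambiguity arising from the ordering of the $G_i$ is harmless, since permuting the factors flips the signs of $\Delta$ and of the triple product simultaneously, so the same curve results in every case.

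It remains to identify this curve with $C'^{(p;i,j)}$ over $\mathbb{Q}$. I would apply the $\mathbb{Q}$-rational change of variables $x=X/2$, $y=Y/4$: then $x^2-2^ip^{j}=\tfrac14(X^2-2^{i+2}p^{j})$ and $x^2-2^{i+1}p^{j}=\tfrac14(X^2-2^{i+3}p^{j})$, while the leading constant $2\cdot\tfrac12\cdot\tfrac14\cdot\tfrac14=\tfrac1{16}$ is cancelled exactly by $y^2=Y^2/16$, yielding
\[
Y^2=X(X^2-2^{i+2}p^{j})(X^2-2^{i+3}p^{j}).
\]
Since this map is an isomorphism of curves over $\mathbb{Q}$, it induces an isomorphism of Jacobians; composing it with the Richelot isogeny shows that $J^{(p;i,j)}$ is isogenous to $J'^{(p;i,j)}$, which proves the corollary. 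The computation is entirely routine, so the only real point of care is bookkeeping: I must confirm that $\Delta$ and the cross-factors are computed correctly in the degenerate degree-$5$ case, and I must track the powers of $2$ so that the spurious constant $2$ produced by the Richelot construction is absorbed by the rescaling $(x,y)\mapsto(x/2,y/4)$ without introducing an extra factor of $p$ or a sign error.
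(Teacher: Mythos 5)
Your proposal is correct and is exactly the argument the paper leaves implicit: apply the Richelot theorem to the factorization $G_1=x$, $G_2=x^2+2^ip^j$, $G_3=x^2+2^{i+1}p^j$ (with $\Delta=-2^ip^j\neq 0$), obtain $y^2=2x(x^2-2^ip^j)(x^2-2^{i+1}p^j)$, and absorb the factor $2$ by the $\mathbb{Q}$-isomorphism $(x,y)\mapsto(x/2,y/4)$. All of your determinant, cross-factor, and rescaling computations check out, so this fills in the routine verification the paper omits.
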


If $i=2$ and $j$ is odd, the Jacobian variety of $C'^{(p;i,j)}$ is isomorphic to that of the hyperelliptic curve defined by 
\[y^2=x(x^2-16p^j)(x^2-32p^j) \]
by \cref{R-isog}.
The above curve is isomorphic to the hyperelliptic curve defined by
\[y^2=x(x^2-p^j)(x^2-2p^j),\]
which we also denote by $C'^{(p;i,j)}$.
Let $J'^{(p;i,j)}$ be the Jacobian variety of $C'^{(p;i,j)}$.
In what follows, we show that $\mathrm{rank}(J'^{(p;i,j)} (\mathbb{Q}))=0$.
Since $J'^{(p; i, j)}(\mathbb{Q)}/2J'^{(p; i, j)}(\mathbb{Q})$ can be embedded into the $2$-Selmer group $\Sel(\mathbb{Q},J'^{(p; i, j)})$, 
in order to bound the Mordell-Weil rank from above, it is sufficient to calculate the dimension of the $2$-Selmer group.
By \cite{Schaefer98}, we have
\begin{align*} 
\Sel(\mathbb{Q},J'^{(p; i, j)}) \simeq \{\alpha \in \Ker(N: L(S,2) \to \mathbb{Q}^{\times}/\mathbb{Q}^{\times 2}) \mid  \forall v \in S, \; \res_v(\alpha) \in \im(\delta_v) \}.
\end{align*} 
In each case, we can prove that the right hand side is generated by the image of $J(\mathbb{Q})[2]$. 
Here and after, we follow the notation in \cite{Schaefer98} and \cite{Stoll2001} as below.

\begin{notation} 
Suppose that $i=2$ and $j$ is odd.
We fix $p$, so we abbreviate $C'^{(p;i,j)}$ to $C$ and $J'^{(p;i,j)}$ to $J$. 
Let $y^2=f(x)$ be the defining equation of $C$.
Denote
\begin{itemize}
\item the $x$-coordinate of the point $P \in C(\mathbb{Q})$ by $x_{P}$,

\item every divisor class in $J(\mathbb{Q})$ represented by a divisor $D$ simply by $D$,

\item a fixed algebraic closure of $\mathbb{Q}$ by $\ol{\mathbb{Q}}$.

\end{itemize}
For every place $v$, we also use a similar notation and fix an embedding $\ol{\mathbb{Q}} \hookrightarrow \ol{\mathbb{Q}_v}$. 
Define
\begin{itemize} 
\item 
$L:=\mathbb{Q}[T]/(f(T)) \overset{\simeq}{\to}  
\prod_{k=1}^3 L^{(k)};$ $T \mapsto (T_1;T_2;T_3)$, where 
$L^{(1)}:=\mathbb{Q}[T_1]/(T_1)$, $L^{(2)}:=\mathbb{Q}[T_2]/(T_2^2-p^j)$ 
and $L^{(3)}:=\mathbb{Q}[T_3]/(T_3^2-2p^j)$.
We denote the trivial elements in $L^{\times}$ and $L^{\times}/{L^{\times 2}}$ by $\1$.

\item
$\delta:=x-T: J(\mathbb{Q}) \to L^{\times}/L^{\times 2};$ $D = \sum_{i = 1}^{n} m_{i} P_{i} \mapsto \prod_{i = 1}^{n}(x_{P_{i}}-T)^{m_{i}}$, where $D$ is a divisor whose support is disjoint from the support of the divisor $\mathrm{div}(y)$.
\end{itemize}
For every place $v$, define  
\begin{itemize} 
\item 
$L_v:=\mathbb{Q}_v[T]/(f(T)) \overset{\simeq}{\to} \prod_{k=1}^3 L_v^{(k)};$ 
$T \mapsto (T_1;T_2;T_3)$, where $L_v^{(1)}:=\mathbb{Q}_v[T_1]/(T_1)$, $L_v^{(2)}:=\mathbb{Q}_v[T_2]/(T_2^2-p^j)$ 
and $L_v^{(3)}:=\mathbb{Q}_v[T_3]/(T_3^2-2p^j)$.
For $v=\infty$, we fix isomorphisms
\begin{align*}
L_{\infty}^{(2)} &\simeq \mathbb{R} \times \mathbb{R}; \; T_2 \mapsto (\sqrt{p^j},-\sqrt{p^j}),\\
L_{\infty}^{(3)} &\simeq \mathbb{R} \times \mathbb{R}; \; T_3 \mapsto (\sqrt{2p^j},-\sqrt{2p^j}).
\end{align*}
We denote the trivial elements in $L_v^{\times}$ and $L_v^{\times}/{L_v^{\times 2}}$ by $\1_v$.

\item
$\delta_{v} :=(x-T)_v: J(\mathbb{Q}_{v}) \to L_v^{\times}/L_v^{\times 2};$ $D = \sum_{i = 1}^{n} m_{i} P_{i} \mapsto  \prod_{i = 1}^{n}(x_{P_{i}}-T)^{m_{i}}$, where $D$ is a divisor whose support is disjoint from the support of the divisor $\mathrm{div}(y)$.

\item
$\res_v: L^{\times}/L^{\times 2} \to L_v^{\times}/L_v^{\times 2}$ as the map induced by $L \to L_v;$ $T \mapsto T$.

\item $N:L^{\times}/L^{\times 2} \to \mathbb{Q}^{\times}/\mathbb{Q}^{\times 2}$ as the norm map.

\end{itemize}
Finally, define
\begin{itemize}

\item $S := \{2, p, \infty\}$.

\item $L(S,2):=\prod_{k=1}^3 L^{(k)}(S,2)$, where 
\[ L^{(k)}(S,2):=\{\alpha \in L^{(k) \times}/L^{(k) \times 2} \mid L^{(k)}(\sqrt{\alpha})/L^{(k)} \; \mbox{is unramified outside $S$} \}. \]
\item $\res_S:=\prod_{v \in S} \res_v: L^{\times}/L^{\times 2} \to \prod_{v \in S} L_v^{\times}/L_v^{\times 2}$.
\end{itemize}
\end{notation}

%%%%%
%%%%% Sketch
%%%%%

\subsection{Sketch of the proof of \cref{MT} (3), (4)}

In this subsection, we sketch the proof of \cref{MT} (3), (4).
By the straightforward calculation along the lines of \cite{Schaefer98}, we obtain the following tables for
$L^{(k)}(S,2)$ and $\im (\delta_v)$. 

\begin{lemma} \label{LS2}
In each case, the elements of the following table form a basis for $L^{(k)}(S,2)$.

\begin{table}[ht]
\begin{tabular}{cc} %Table 1 ($L^{(k)}(S,2)$ in Case (3))
\begin{minipage}{0.57 \hsize}
\begin{tabular}{|c|c|c|} \hline
$k$ & basis of $L^{(k)}(S,2)$  & norm mod $\mathbb{Q}^{\times 2}$ \\ \hline \hline
$1$
 & $-1$ & $-1$ \\ \cline{2--6}
 & $2$ & $2$ \\ \cline{2--6} 
& $p$ & $p$ \\ \hline 
$2$
 & $-1$ & $1$ \\ \cline{2--6}
 & $\epsilon$ & $-1$ \\ \cline{2--6}
 & $2$ & $4$ \\ \cline{2--6} 
& $T_2$ & $-p$ \\ \hline  
$3$
 & $-1$ & $1$  \\ \cline{2--6}
 & $\epsilon'$ & $-1$ 
  \\ \cline{2--6}
 & $2$ & $4$ \\ \cline{2--6}  
& $T_3$ & $-2p$ \\ \hline 
\end{tabular}
\caption{$L^{(k)}(S,2)$ in Case (3)}
\end{minipage}
%\end{table}
\begin{minipage}{0.57 \hsize}
%\begin{table}[ht]
%
\begin{tabular}{|c|c|c|} \hline %Table 2 ($L^{(k)}(S,2)$ in Case (4))
$k$ & basis of $L^{(k)}(S,2)$  & norm mod $\mathbb{Q}^{\times 2}$ \\ \hline \hline
$1$
 & $-1$ & $-1$ \\ \cline{2--6}
 & $2$ & $2$ \\ \cline{2--6} 
& $p$ & $p$ \\ \hline 
$2$
 & $-1$ & $1$ \\ \cline{2--6}
 & $\epsilon$ & $-1$ \\ \cline{2--6}
 & $2$ & $4$ \\ \cline{2--6} 
& $T_2$ & $-p$ \\ \hline  
$3$
 & $-1$ & $1$  \\ \cline{2--6}
 & $\epsilon'$ & $-1$ 
 \\ \cline{2--6}
 & $2$ & $4$ \\ \cline{2--6} 
& $T_3/p$ & $-2p$ \\ \hline 
\end{tabular}
\caption{$L^{(k)}(S,2)$ in Case (4)}
\end{minipage}
\end{tabular}
\end{table}

Therefore, in case $(3)$, the following eight elements of $L^{\times}/L^{\times 2}$ form an $\mathbb{F}_2$-basis of $\Ker (N: L(S,2) \to \mathbb{Q}^{\times}/\mathbb{Q}^{\times 2})$:
\begin{align*}
(1;1;-1) && (1;1;2), &&  (1;-1;1), && (1;\epsilon;\epsilon'), \\ 
(1;2;1), && (-1;\epsilon;1), && (2;T_2;T_3) && (-p;T_2;1).
\end{align*}
In case (4), the following eight elements of $L^{\times}/L^{\times 2}$ form an $\mathbb{F}_2$-basis of $\Ker (N: L(S,2) \to \mathbb{Q}^{\times}/\mathbb{Q}^{\times 2})$:
\begin{align*}
(1;1;-1) && (1;1;2), &&  (1;-1;1), && (1;\epsilon;\epsilon'), \\ 
(1;2;1), && (-1;\epsilon;1), && (2;T_2;T_3/p), && (-p;T_2;1). 
\end{align*}
\end{lemma}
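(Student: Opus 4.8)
**

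The goal is to compute, for each number field factor $L^{(k)}$, the finite-dimensional $\mathbb{F}_2$-group $L^{(k)}(S,2)$ of elements of $L^{(k)\times}/L^{(k)\times 2}$ unramified outside $S=\{2,p,\infty\}$, produce an explicit basis, and read off the norms. This is a standard but bookkeeping-heavy computation in the style of \cite{Schaefer98}, and the plan is to treat the three factors separately since they are governed by different fields.

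The plan is to begin with the easy factor $L^{(1)}=\mathbb{Q}$ (since $T_1$ satisfies $T_1=0$). Here $L^{(1)}(S,2)$ is just $\mathbb{Q}(S,2)$, the group of rationals whose square class is supported on $S$, which has the standard basis $\{-1,2,p\}$ with the obvious norms $-1$, $2$, $p$; this requires no work beyond citing the general structure of $\mathbb{Q}(S,2)$. The substance lies in the two quadratic factors $L^{(2)}=\mathbb{Q}(\sqrt{p^j})=\mathbb{Q}(\sqrt{p})$ (as $j$ is odd) and $L^{(3)}=\mathbb{Q}(\sqrt{2p^j})=\mathbb{Q}(\sqrt{2p})$. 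For each, I would use the fundamental exact sequence relating $L^{(k)}(S,2)$ to the $S$-units and the $S$-class group of the ring of $S$-integers $\mathcal{O}_{L^{(k)},S}$: namely $0 \to \mathcal{O}_{L^{(k)},S}^{\times}/\mathcal{O}_{L^{(k)},S}^{\times 2} \to L^{(k)}(S,2) \to \Cl(\mathcal{O}_{L^{(k)},S})[2] \to 0$.

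Thus for each quadratic field I would first determine the $S$-class group and the $S$-unit group. The dimension count proceeds as follows. I would determine the primes of $L^{(k)}$ above $2$ and $p$ (using the congruence conditions on $p$ modulo $16$ to control splitting behaviour via quadratic reciprocity), compute the ordinary $2$-part of the class number of $\mathbb{Q}(\sqrt{p})$ and $\mathbb{Q}(\sqrt{2p})$, and pass to the $S$-class group by killing the classes of the primes above $2$ and $p$. For the $S$-units, beyond $\pm 1$ and the fundamental unit (whose existence forces the appearance of the unit-type generators, which I have named $\epsilon$ and $\epsilon'$), I would adjoin generators coming from the prime factorisations $(\sqrt{p^j})$ and $(\sqrt{2p^j})$ of $p$ and $2p$, which account for the generators $T_2$ and $T_3$ (resp.\ $T_3/p$). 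The congruence conditions $p\equiv 13$ or $5 \pmod{16}$ are exactly what is needed to pin down the relevant quadratic residue symbols $\left(\tfrac{2}{p}\right)$, $\left(\tfrac{-1}{p}\right)$, and hence the splitting of $2$ and the $2$-rank of the class group. After assembling a spanning set, I would verify it has the asserted dimension $4$ by checking $\mathbb{F}_2$-independence, and finally compute each generator's norm to $\mathbb{Q}^{\times}/\mathbb{Q}^{\times 2}$ using $N_{L^{(2)}/\mathbb{Q}}(T_2)=-p^j\equiv -p$, $N_{L^{(3)}/\mathbb{Q}}(T_3)=-2p^j\equiv -2p$, $N(\epsilon)=N(\epsilon')=-1$ for the fundamental units, and $N(2)=4$, $N(-1)=1$ in a quadratic field.

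The main obstacle I anticipate is the explicit control of the $2$-part of the $S$-class groups of $\mathbb{Q}(\sqrt{p})$ and $\mathbb{Q}(\sqrt{2p})$ and, correlated with it, whether the fundamental unit has norm $+1$ or $-1$; these are precisely the data that determine whether the genuine $S$-unit generators $\epsilon,\epsilon'$ exist with norm $-1$ as the table asserts. Getting the dimension to land on exactly $4$ in each factor, rather than overcounting, requires carefully using the genus-theory relations among the ambiguous ideal classes together with the prescribed congruence of $p$; this is where the hypotheses $p\equiv 13\pmod{16}$ and $p\equiv 5\pmod{16}$ do the real work. Once the bases are fixed, deducing the stated $\mathbb{F}_2$-basis of $\Ker(N)$ inside $L(S,2)=\prod_k L^{(k)}(S,2)$ is linear algebra: I would write the norm map as an $\mathbb{F}_2$-linear functional on the $3+4+4=11$-dimensional space $L(S,2)$ using the norm column of the table, and exhibit the eight listed triples as a basis of its kernel, checking independence and that each indeed lies in the kernel.
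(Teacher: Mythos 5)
Your proposal is correct and is essentially the paper's own approach: the paper disposes of this lemma as a ``straightforward calculation along the lines of \cite{Schaefer98}'' --- precisely the $S$-unit/$S$-class-group computation you outline --- supplemented by its \cref{Norm}, which supplies the entries $N(\epsilon)=N(\epsilon')=-1$ for $p\equiv 5\pmod{8}$. The only minor divergence is that you would pin down the unit norms and the $2$-part of the class groups via genus theory, whereas the paper proves the unit-norm statement by an elementary Pell-equation descent; both arguments rest on the same residue facts $\left(\tfrac{-1}{p}\right)=1$ and $\left(\tfrac{\pm 2}{p}\right)=-1$.
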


%\clearpage
Next, we give the basis of $\im(\delta_v)$ and $J(\mathbb{Q}_v)/2J(\mathbb{Q}_v)$.
In each row for $J(\mathbb{Q}_v)/J(\mathbb{Q}_v)$, we give a polynomial in $x$ whose roots are the $x$-coordinates of points $R$ or $R_1+R_2$
such that $[R-\infty]$ or $[R_1+R_2-2\infty] \in J(\mathbb{Q}_v)$.
%\clearpage
\begin{lemma} \label{Jv}
In each cases, the following elements form a basis of $J(\mathbb{Q}_v)/2J(\mathbb{Q}_v)$.
\begin{table}[ht]
\begin{tabular}{|c|rcl|} \hline %Table 3 ($J_v$ in Case (3))
$v$ & \multicolumn{1}{c}{$J(\mathbb{Q}_v)/2J(\mathbb{Q}_v)$} &   & \multicolumn{1}{c|}{$\im(\delta_v)$}  \\ \hline \hline
$p$ & $x$ & $\mapsto$ & $(2;T_2;T_3)$  \\ \cline{2--3} \cline{3--4} \cline{4--5}
& $x^2-p$ & $\mapsto$ & $(p; T_2;p)$ \\ \hline 
$\infty$ & $x$ & $\mapsto$ & $(1;(-1,1);(-1,1))$ \\ \cline{2--3} \cline{3--4} \cline{4--5} 
& $x+\sqrt{p}$ & $\mapsto$ & $(-1;(-1,-1);(-1,1))$ \\ \hline
$2$ & $x$ & $\mapsto$ & $(2;-T_2;-T_3)$ \\ \cline{2--3} \cline{3--4} \cline{4--5}
& $x^2-p$ & $\mapsto$ & $(3;T_2;-3)$ \\ \cline{2--3} \cline{3--4} \cline{4--5}
& $x-6$ & $\mapsto$ & $(6;6-T_2;6-T_3)$  \\ \cline{2--3} \cline{3--4} \cline{4--5}
& $x-5$ & $\mapsto$ & $(5;5-T_2;5-T_3)$ if $p \equiv 13 \pmod{32}$ \\ \cline{2--3} \cline{3--4} \cline{4--5}
& $x-13$ & $\mapsto$ & $(5;13-T_2;13-T_3)$ if $p \equiv 29 \pmod{32}$ \\ \hline
\end{tabular}
\caption{Generators of $J(\mathbb{Q}_v)/2J(\mathbb{Q}_v)$ and $\im(\delta_v)$ in Case (3)}
\begin{tabular}{|c|rcl|} \hline %Table 4 ($J_v$ in Case (4))
$v$ & \multicolumn{1}{c}{$J(\mathbb{Q}_v)/2J(\mathbb{Q}_v)$} &   & \multicolumn{1}{c|}{$\im(\delta_v)$}  \\ \hline \hline
$p$ & $x$ & $\mapsto$ &  $(2;T_2;T_3)$  \\ \cline{2--3} \cline{3--4} \cline{4--5}
& $x^2-p^3$ & $\mapsto$ & $(p; T_2;p)$ \\ \hline 
$\infty$ & $x$ & $\mapsto$ & $(1;(-1,1);(-1,1))$ \\ \cline{2--3} \cline{3--4} \cline{4--5} 
& $x+p\sqrt{p}$ & $\mapsto$ & $(-1;(-1,-1);(-1,1))$ \\ \hline
$2$ & $x$ & $\mapsto$ & $(2;-T_2;-T_3)$ \\ \cline{2--3} \cline{3--4} \cline{4--5}
& $x^2-p^3$ & $\mapsto$ & $(3;T_2;-3)$ \\ \cline{2--3} \cline{3--4} \cline{4--5}
& $x-6$ & $\mapsto$ & $(6;6-T_2;6-T_3)$  \\ \cline{2--3} \cline{3--4} \cline{4--5}
& $x-13$ & $\mapsto$ & $(5;13-T_2;13-T_3)$ if $p \equiv 5 \pmod{32}$ \\ \cline{2--3} \cline{3--4} \cline{4--5}
& $x-5$ & $\mapsto$ & $(5;5-T_2;5-T_3)$ if $p \equiv 21 \pmod{32}$ \\ \hline
\end{tabular}
\caption{Generators of $J(\mathbb{Q}_v)/2J(\mathbb{Q}_v)$ and $\im(\delta_v)$ in Case (4)}
\end{table}
\end{lemma}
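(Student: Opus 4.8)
The plan is to prove \cref{Jv} by a standard but careful local analysis at each place $v \in S = \{2, p, \infty\}$, computing $J(\mathbb{Q}_v)/2J(\mathbb{Q}_v)$ and the image of the descent map $\delta_v = (x - T)_v$. First I would recall that for a genus-$2$ Jacobian, one has the local formula $\dim_{\mathbb{F}_2} J(\mathbb{Q}_v)/2J(\mathbb{Q}_v) = \dim_{\mathbb{F}_2} J(\mathbb{Q}_v)[2] + g \cdot [\mathbb{Q}_v : \mathbb{Q}_v]_{\text{arch}}$ — concretely, $\dim J(\mathbb{Q}_v)/2J(\mathbb{Q}_v) = \dim J(\mathbb{Q}_v)[2]$ for the nonarchimedean $v = 2, p$, and $\dim J(\mathbb{R})/2J(\mathbb{R}) = \dim J(\mathbb{R})[2] - g$ at $v = \infty$. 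Since $f(x) = x(x^2 - p^j)(x^2 - 2p^j)$ factors over $\mathbb{Q}$ as a linear times two irreducible quadratics, the $2$-torsion structure is governed by how these quadratic factors split over each $\mathbb{Q}_v$, which fixes the target dimension of $J(\mathbb{Q}_v)/2J(\mathbb{Q}_v)$ in each case. This tells me exactly how many independent generators I must exhibit, so the tables should list precisely the right number of rows.

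Next I would produce the generators themselves. For the points listed as $x \mapsto (\text{polynomial in } x)$, the prescription is that the displayed polynomial has as its roots the $x$-coordinates of a point $R$ (or pair $R_1 + R_2$) with $[R - \infty]$ (or $[R_1 + R_2 - 2\infty]$) lying in $J(\mathbb{Q}_v)$, and I compute $\delta_v$ of that class by evaluating $\prod (x_{P_i} - T)^{m_i}$ in $L_v^\times / L_v^{\times 2} \simeq \prod_k L_v^{(k)\times}/L_v^{(k)\times 2}$. For the Weierstrass point $x = 0$ (the row ``$x$'') I must use the modified formula for $\delta_v$ at a point in the support of $\mathrm{div}(y)$, i.e. replace the vanishing coordinate by $f'(0)$ divided appropriately; this is the source of entries like $(2; T_2; T_3)$ at $v = p$ versus $(2; -T_2; -T_3)$ at $v = 2$, reflecting the different square classes of $f'(0) = -2p^{2j} = 2 \cdot (\text{unit})$ locally. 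The archimedean rows are the easiest: I track only signs, using the fixed real embeddings $T_2 \mapsto (\sqrt{p^j}, -\sqrt{p^j})$ and $T_3 \mapsto (\sqrt{2p^j}, -\sqrt{2p^j})$ to compute the sign vector of $x_P - T_k$, which yields entries like $(1; (-1,1); (-1,1))$.

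The real work — and the main obstacle — is at $v = 2$, where the local square classes are subtle ($\mathbb{Q}_2^\times/\mathbb{Q}_2^{\times 2}$ has $\mathbb{F}_2$-dimension $3$, generated by $-1, 2, 5$) and where the generating set genuinely depends on the residue of $p$ modulo $32$. Here I would find explicit $2$-adic points on $C$: the rows $x - 6$, $x - 5$, $x - 13$ assert that $6$, $5$, $13$ are $x$-coordinates of $\mathbb{Q}_2$-points, i.e. that $f(6)$, $f(5)$, $f(13)$ are squares in $\mathbb{Q}_2$, which I verify by checking the relevant square-class conditions and invoking Hensel's lemma. The split into the subcases $p \equiv 13, 29 \pmod{32}$ (Case 3) and $p \equiv 5, 21 \pmod{32}$ (Case 4) arises precisely because whether $5$ or $13$ gives a $2$-adic point depends on $f(5), f(13) \bmod \mathbb{Q}_2^{\times 2}$, which in turn depends on $p \bmod 32$; I must check in each subcase that the chosen point exists and that the resulting classes in $\im(\delta_2)$ are $\mathbb{F}_2$-independent and exhaust $J(\mathbb{Q}_2)/2J(\mathbb{Q}_2)$.

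Finally I would assemble the tables, verifying in each row that the claimed image under $\delta_v$ lands in the local factor $L_v^{\times}/L_v^{\times 2}$ as written and that the listed classes are linearly independent of the correct total count determined in the first step. The computations at $v = p$ are comparatively clean because $p$ is odd and the quadratic factors $x^2 - p^j$, $x^2 - 2p^j$ have controlled reduction, so the $2$-torsion and its image reduce to Legendre-symbol conditions already fixed by the congruences $p \equiv 13, 5 \pmod{16}$; the archimedean computation is purely a sign bookkeeping. Thus the proof is a finite, if delicate, local verification, with the only genuine difficulty being the dyadic analysis and its dependence on $p \bmod 32$.
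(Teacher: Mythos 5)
Your overall strategy coincides with the paper's: compute $\dim_{\mathbb{F}_2} J(\mathbb{Q}_v)/2J(\mathbb{Q}_v)$ at each $v\in S$, then exhibit that many independent elements of $\im(\delta_v)$, namely the images of the $2$-torsion classes (via Schaefer's formula for divisors supported on Weierstrass points) together with dyadic points of $x$-coordinate $6$, and $5$ or $13$, produced by Hensel's lemma, with the case split depending on $p \bmod 32$. However, your local dimension formula is wrong at the one place where the correction actually matters. You assert that $\dim J(\mathbb{Q}_v)/2J(\mathbb{Q}_v) = \dim J(\mathbb{Q}_v)[2]$ for \emph{both} nonarchimedean places $v=2,p$. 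The correct statement, which the paper takes from Flynn--Poonen--Schaefer, is
\[
\dim_{\mathbb{F}_2} \im(\delta_{v}) \;=\; \dim_{\mathbb{F}_2} J(\mathbb{Q}_{v})[2] \;+\;
\begin{cases}
0 & (v \neq 2, \infty), \\
2 & (v = 2), \\
-2 & (v = \infty),
\end{cases}
\]
the $+2$ at $v=2$ coming from $\# J(K)/2J(K) = \# J(K)[2]\cdot \lVert 2\rVert_K^{-g}$ with $g=2$ and $\lVert 2\rVert_{\mathbb{Q}_2}=\tfrac12$. Since $\dim J(\mathbb{Q}_2)[2]=2$ here, the target dimension at $v=2$ is $4$, not $2$. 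With your count the lemma you are proving becomes self-contradictory: the table lists four classes at $v=2$ which you must show to be independent inside a space you believe is $2$-dimensional. Conversely, it is precisely this $+2$ that forces one to find the extra points $x=6$ and $x=5$ (or $x=13$) and hence to do the $p \bmod 32$ analysis at all; under your formula the two torsion images $(2;-T_2;-T_3)$ and $(3;T_2;-3)$ would already exhaust $J(\mathbb{Q}_2)/2J(\mathbb{Q}_2)$, the remaining rows would be redundant, and the local condition $\res_2(\alpha)\in\im(\delta_2)$ used in the Selmer computation of the next lemma would come out too small.

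Two smaller slips, fixable but worth noting. First, $f'(0) = 2p^{2j}$, not $-2p^{2j}$; the sign is invisible modulo squares at $v=p$ but not at $v=2$ or $v=\infty$, so it would corrupt entries if carried through. Second, the discrepancy between $(2;T_2;T_3)$ at $v=p$ and $(2;-T_2;-T_3)$ at $v=2$ is not caused by the square class of $f'(0)$ — the first component equals $2$ in both rows — but by the fact that $-1$ is a square in $L_p^{(2)}$ and $L_p^{(3)}$ (because $p\equiv 1 \pmod 4$) while it is not a square in $L_2^{(2)}$ or $L_2^{(3)}$, so $-T_2$ and $-T_3$ can be simplified to $T_2$ and $T_3$ only at $v=p$.
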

%\clearpage

From \cref{LS2,Jv} we can show that $\Sel(\mathbb{Q},J)$ is generated by $\delta(J(\mathbb{Q})[2])$.
\begin{lemma} \label{Sel2} 
In case (3), the following two elements of $L^{ \times}/L^{ \times 2}$ form an $\mathbb{F}_{2}$-basis of $\Sel(\mathbb{Q},J)$:
\begin{align*}  
t_1 &:= \delta((0,0)-\infty)=-T+(T^2-p)(T^2-2p)=(2;-T_2;-T_3),\\ 
t_2 &:= \delta \left(\sum_{\substack{P \in C(\ol{\mathbb{Q}}) \\ x_{P}^{2}-p = 0}}P - 2\infty \right) =(T^2-p)-T(T^2-2p)=(-p;T_2;p).
\end{align*}  
In case (4), the following two elements of $L^{ \times}/L^{ \times 2}$ form an $\mathbb{F}_{2}$-basis of $\Sel(\mathbb{Q},J)$:
\begin{align*}  
t_1 &:= \delta((0,0)-\infty)=-T+(T^2-p^3)(T^2-2p^3)=(2;-T_2;-T_3),\\ 
t_2 &:= \delta \left(\sum_{\substack{P \in C(\ol{\mathbb{Q}}) \\ x_{P}^{2}-p^3 = 0}}P - 2\infty \right) =(T^2-p^3)-T(T^2-2p^3)=(-p;T_2;p).
\end{align*}  
\end{lemma}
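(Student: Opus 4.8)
The plan is to extract $\Sel(\mathbb{Q},J)$ from the description
\[
\Sel(\mathbb{Q},J) \simeq \{\alpha \in \Ker(N) \mid \forall v \in S, \; \res_v(\alpha) \in \im(\delta_v)\}
\]
as the subspace of the eight-dimensional $\mathbb{F}_2$-vector space $V := \Ker(N: L(S,2) \to \mathbb{Q}^{\times}/\mathbb{Q}^{\times 2})$ cut out by the three local conditions at the places $v \in S = \{2,p,\infty\}$. Since \cref{LS2} provides an explicit basis of $V$ and \cref{Jv} provides $\im(\delta_v)$ for each $v$, the computation of $\Sel(\mathbb{Q},J)$ reduces to $\mathbb{F}_2$-linear algebra: evaluate $\res_v$ on each of the eight basis vectors, reduce modulo $\im(\delta_v)$, and intersect the kernels of the resulting linear maps over $v \in S$.

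First I would establish the lower bound $\dim_{\mathbb{F}_2}\Sel(\mathbb{Q},J) \geq 2$. The divisor classes $(0,0)-\infty$ and $\sum_{x_P^2 = p^j} P - 2\infty$ are $\mathbb{Q}$-rational and $2$-torsion, being supported on Weierstrass points and stable under $\Gal(\ol{\mathbb{Q}}/\mathbb{Q})$; hence $t_1, t_2 \in \delta(J(\mathbb{Q})[2]) \subseteq \Sel(\mathbb{Q},J)$, and in particular they automatically lie in $\Ker(N)$ and satisfy every local condition, so no separate local check is required. It then suffices to verify that $t_1$ and $t_2$ are $\mathbb{F}_2$-independent in $L^{\times}/L^{\times 2}$, which follows by rewriting each in the basis of \cref{LS2}, using the relation $T_3^2 = 2p^j$ (so that $p \equiv 2$ in $L^{(3)\times}/L^{(3)\times 2}$) to normalize the third component of $t_2$, and comparing coordinates.

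The substance of the argument is the matching upper bound $\dim_{\mathbb{F}_2}\Sel(\mathbb{Q},J) \leq 2$, equivalently that the three local conditions together have rank $6$ on $V$. At $v=\infty$ the factors $L_\infty^{(2)}, L_\infty^{(3)}$ split as $\mathbb{R}\times\mathbb{R}$, and the sign conditions recorded in \cref{Jv} rule out several basis combinations; at $v=p$ the factors $L_p^{(2)}, L_p^{(3)}$ are the ramified quadratic extensions $\mathbb{Q}_p(\sqrt{p^j}), \mathbb{Q}_p(\sqrt{2p^j})$, and membership in $\im(\delta_p)$ imposes valuation and residue constraints that are quick to check. The delicate step — and the main obstacle — is the condition at $v=2$: here $\im(\delta_2)$ is four-dimensional, the square classes of its generators must be pinned down inside the $2$-adic étale algebra $L_2 = \prod_k L_2^{(k)}$ (whose factors have comparatively large square-class groups), and the relevant table entry depends on $p \bmod 32$, so the analysis splits into the subcases $p \equiv 13, 29 \pmod{32}$ in case (3) and $p \equiv 5, 21 \pmod{32}$ in case (4). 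Careful $2$-adic bookkeeping then shows that the intersection of the three local conditions is exactly the two-dimensional span of $t_1$ and $t_2$.

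Combining the two bounds yields $\Sel(\mathbb{Q},J) = \langle t_1, t_2 \rangle$ in each case, which is the assertion of the lemma. I expect the archimedean and $p$-adic conditions to be routine once \cref{Jv} is in hand, and the genuine work to lie in the $2$-adic computation, where tracking units and the dependence on $p \bmod 32$ is the most error-prone part.
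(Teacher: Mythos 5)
Your overall route is the same as the paper's: the lower bound comes from observing that $t_1,t_2$ are images under $\delta$ of rational $2$-torsion classes, hence lie in $\Sel(\mathbb{Q},J)$ with no separate local verification, and are independent (their first components $2$ and $-p$ are already independent in $L^{(1)}(S,2)$); the upper bound is the $\mathbb{F}_2$-linear algebra assertion that the three local conditions cut $\Ker(N\colon L(S,2)\to\mathbb{Q}^{\times}/\mathbb{Q}^{\times 2})$ down by rank $6$. The paper phrases this last step as the linear independence of the elements $d_3,\dots,d_8,h_1,\dots,h_8$ of $\prod_{v\in S}L_v^{\times}/L_v^{\times 2}$, where the $d$'s generate $\prod_{v}\im(\delta_v)$ and the $h$'s are the restrictions $\res_S$ of the basis of \cref{LS2}, after using $\res_S(t_1)=d_1d_5d_7$ and $\res_S(t_2)=d_2d_6d_7d_8$ to discard $d_1,d_2$; your formulation (intersect the kernels of the induced maps to $(L_v^{\times}/L_v^{\times 2})/\im(\delta_v)$) is equivalent to that.

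The genuine gap is at $v=p$, which you dismiss as ``valuation and residue constraints that are quick to check.'' The basis of \cref{LS2} contains elements built from the \emph{global} fundamental units $\epsilon$ of $\mathbb{Q}(\sqrt p)$ and $\epsilon'$ of $\mathbb{Q}(\sqrt{2p})$, namely $(1;\epsilon;\epsilon')$ and $(-1;\epsilon;1)$, and to rule out relations one must decide whether combinations such as $\epsilon$, $2$, $2\epsilon$ in the second coordinate become squares in the ramified local field $L_p^{(2)}=\mathbb{Q}_p(\sqrt{p^j})$, i.e.\ whether the corresponding restrictions can land in $\im(\delta_p)=\langle(2;T_2;T_3),(p;T_2;p)\rangle$. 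This is not determined by the data of \cref{LS2,Jv} and is not formal linear algebra: the paper needs \cref{Norm} (the fundamental unit has norm $-1$ when $p\equiv 5\pmod 8$, proved by a descent on the Pell equation) and \cref{fu} ($\epsilon,\,2\epsilon\notin\mathbb{Z}_p[\sqrt p]^{\times 2}$), and it invokes \cref{fu} exactly at the step excluding relations that contain $h_4'$ and $h_5$. Without this arithmetic input the rank-$6$ count cannot be closed, so while your plan is sound and matches the paper's proof in structure — including the correct anticipation of the mod-$32$ subcases at $v=2$ — it is missing the one non-mechanical ingredient, and it locates the real difficulty at the wrong place: the $2$-adic step is lengthy but routine bookkeeping, whereas the $p$-adic step is where a new idea is required.
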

Therefore, by \cref{LS2,Jv,Sel2}, we obtain $\rank (J(\mathbb{Q}))=0$.
This completes the proof of \cref{rank=0}.

Finally, by taking \cref{rank=0} into account, \cref{MT} $(3)$, $(4)$ is an immediate consequence of the following proposition that we proved in \cite{HM2}.

\begin{proposition} [{\cite{HM2} Proposition 3.1}] \label{AJ}
Let $p$ be a prime number, $i, j \in {\mathbb{Z}}_{\geq 0}$, and $C^{(p;i,j)}$ be the hyperelliptic curve defined by
$y^2=x(x^2+2^ip^{j})(x^2+2^{i+1}p^{j})$ and $J$ be its Jacobian variety.
Let $\phi:C^{(p;i,j)} \to J$ be the Abel-Jacobi map defined by $\phi(P)=[P- \infty]$.
Let $P \in C^{(p;i,j)} (\mathbb{Q}) \setminus \{\infty\}$ be a rational point such that $\phi(P) \in J^{(p;i,j)}(\mathbb{Q})_{\mathrm{tors}}$. 
\begin{enumerate}
\item Suppose that $p \neq 3$. Then, $P=(0,0)$.
\item Suppose that $p =3$ and $(i,j) \not\equiv (2,2)$, $(3,2) \pmod{4}$.
Then, $P=(0,0)$.
\end{enumerate}
\end{proposition}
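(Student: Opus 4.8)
The plan is to use the Lutz--Nagell type theorem of Grant \cite[Theorem 3]{Grant} to convert the torsion hypothesis on $\phi(P)$ into integrality and support constraints on the coordinates of $P$, and then to reduce the statement to an elementary analysis of $S$-unit equations with $S=\{2,p\}$. Write $P=(x_0,y_0)$ and set $a:=2^ip^j$, so that $f(x)=x(x^2+a)(x^2+2a)$ is a monic quintic in $\mathbb{Z}[x]$ and $y_0^2=x_0(x_0^2+a)(x_0^2+2a)$. A direct computation gives $\disc(f)=2^{7}a^{10}=2^{7+10i}p^{10j}$. Since $\phi(P)=[P-\infty]$ lies in $J(\mathbb{Q})_{\tors}$, Grant's theorem forces $x_0,y_0\in\mathbb{Z}$ and bounds $y_0$ by a divisor of $\disc(f)$ (up to a universal power of $2$); in particular $y_0$ is supported on $\{2,p\}$, i.e.\ $y_0=\pm 2^cp^d$. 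The Weierstrass case $y_0=0$ forces $x_0=0$, since $0$ is the only rational root of $f$, giving $P=(0,0)$ at once.

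Next I would propagate the support constraint from $y_0$ to $x_0$ and to each factor. For any prime $q\neq 2,p$ one has $q\nmid a$, so $q$ cannot simultaneously divide $x_0$ and either of $x_0^2+a$, $x_0^2+2a$; since $\val_q(y_0^2)=0$, the factorization $y_0^2=x_0(x_0^2+a)(x_0^2+2a)$ forces $q\nmid x_0$. Hence $x_0$ is also supported on $\{2,p\}$, say $x_0=\pm 2^ep^g$. Moreover the three factors are pairwise coprime away from $\{2,p\}$ (their pairwise gcds divide $a$ or $2a$), so, the product being a perfect square, each of $x_0^2+a$ and $x_0^2+2a$ is itself a $\{2,p\}$-integer. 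This yields the pair of $S$-unit equations
\[ 2^{2e}p^{2g}+2^ip^j=2^{a_1}p^{b_1}, \qquad 2^{2e}p^{2g}+2^{i+1}p^j=2^{a_2}p^{b_2}. \]

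The heart of the argument is then the solution of these two equations together with the requirement that the full product be a square. Comparing valuations at $2$ and $p$ splits the analysis into a balanced case, where $x_0^2=2^ip^j$ (so $i=2e$, $j=2g$) or $x_0^2=2^{i+1}p^j$, and an unbalanced case where one valuation strictly dominates. In the balanced case $x_0^2=2^ip^j$ the first equation reads $x_0^2+a=2x_0^2$ (harmless), while the second reads $x_0^2+2a=3x_0^2$; this is supported on $\{2,p\}$ if and only if $p=3$, and the resulting product $6x_0^5$ is a square exactly when $e$ and $g$ are both odd, i.e.\ $(i,j)\equiv(2,2)\pmod 4$. The symmetric balanced case $x_0^2=2^{i+1}p^j$ produces the factor $\tfrac{3}{2}x_0^2$ and, again only for $p=3$, the excluded class $(i,j)\equiv(3,2)\pmod 4$; these recover the sporadic points such as $(6,\pm 216)$ on $C^{(3;2,2)}$. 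In the unbalanced case one factors out the dominant prime power, reducing the support condition to Catalan-type relations such as $1+p^{c}=2^{d}$; imposing this for both $x_0^2+a$ and $x_0^2+2a$ simultaneously, together with the square condition, then eliminates every $p\neq 3$ and every remaining configuration with $x_0\neq 0$.

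The main obstacle is this final $S$-unit and valuation bookkeeping: one must organize the cases on $(\val_2 x_0,\val_p x_0)$ against $(i,j)$ so that every nonzero $x_0$ is eliminated unless $p=3$ and $(i,j)\equiv(2,2)$ or $(3,2)\pmod 4$, matching the stated exceptions exactly, and one must confirm that in all remaining configurations $x_0(x_0^2+a)(x_0^2+2a)$ cannot be a perfect square. Note that for $p\neq 3$ the obstruction is one of support (the leftover factor $3x_0^2$ introduces the prime $3\notin\{2,p\}$), whereas for $p=3$ support never fails and it is the square condition, through the parities of $e$ and $g$, that isolates precisely the two excluded congruence classes. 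A secondary point requiring care is the precise output of Grant's theorem \cite[Theorem 3]{Grant} for the degree-$5$ model at hand, in particular the exact divisibility bound relating $y_0$ to $\disc(f)$, since it is this bound, rather than mere integrality, that confines $y_0$ (and hence $x_0$) to $\{2,p\}$ and makes the Diophantine reduction finite.
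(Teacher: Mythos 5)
Your proposal is correct and follows essentially the same route as the paper: the paper derives this proposition (quoted from [HM2]) precisely from Grant's Lutz--Nagell type theorem (Theorem \ref{GLN}), which gives the integrality of the coordinates and the divisibility $y_0^2 \mid \disc(f) = 2^{7+10i}p^{10j}$, and the remaining step is the elementary $\{2,p\}$-support and valuation analysis you sketch. Your case bookkeeping checks out -- the balanced cases $x_0^2 = 2^ip^j$ and $x_0^2 = 2^{i+1}p^j$ yield exactly the exceptions $p=3$ with $(i,j)\equiv(2,2),(3,2)\pmod 4$, and the unbalanced cases are jointly inconsistent -- so this is a faithful reconstruction of the proof the paper defers to [HM2].
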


This proposition follows from the following Lutz-Nagell type theorem.
\begin{theorem} [{\cite[Theorem 3]{Grant}}] \label{GLN} 
Let $C$ be the hyperelliptic curve of odd degree defined by $y^2=f(x)$, and $J$ be its Jacobian variety.
Let $\phi:C \to J$ be the Abel-Jacobi map defined by $\phi(P)=[P- \infty]$.
Let $P \in C(\mathbb{Q}) \setminus \{\infty\}$ be a rational point such that $\phi(P) \in J(\mathbb{Q})_{\mathrm{tors}}$. 
 Then,
\begin{enumerate}
\item $a$, $b \in \mathbb{Z}$.
\item Either $b=0$ or $b^2 \mid \disc(f)$.
\end{enumerate}
\end{theorem}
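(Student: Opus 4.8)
The plan is to deduce both assertions from a single Lutz--Nagell type integrality statement for the Jacobian itself: \emph{if a class $D \in J(\mathbb{Q})_{\tors}$ is represented in Mumford form by a pair $(u(x),v(x))$ with $u$ monic, $\deg v < \deg u$ and $u \mid v^2 - f$, then $u, v \in \mathbb{Z}[x]$}. I would prove this prime by prime using the formal group of $J$. Fix a prime $\ell$; the kernel of the reduction map $J(\mathbb{Q}_\ell) \to \tilde{J}(\mathbb{F}_\ell)$, taken with respect to a Néron (or regular) model, is isomorphic to the group of $\ell$-adic points of the formal group $\hat{J}$ of dimension $g$, which is a pro-$\ell$ group and, after passing to a deep enough layer of its filtration, is torsion-free. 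A class whose Mumford data fails to be $\ell$-integral reduces to the identity and hence lies in this kernel; since $D$ is torsion and the kernel has no nontrivial torsion in the relevant range, $D$ must either reduce to a non-identity point or vanish, and in either case a short valuation computation forces the coefficients of $u$ and $v$ to be $\ell$-integral. Running this over all $\ell$ gives $u, v \in \mathbb{Z}[x]$. I expect the genuine difficulty to sit exactly here: at $\ell = 2$ and at the primes of bad reduction, where $\hat{J}$ may carry $\ell$-torsion and the naive model $y^2=f(x)$ is singular, one must invoke the Néron model and the structure of its special fibre to rule out torsion in the kernel of reduction.

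Granting this integrality statement, part $(1)$ is immediate. Apply it to $\phi(P) = [P - \infty]$, where I write $P=(a,b)$; its Mumford representation is simply $(x - a,\; b)$. Integrality gives $a \in \mathbb{Z}$ and $b \in \mathbb{Z}$. (Alternatively, once $a \in \mathbb{Z}$ is known, $b^2 = f(a) \in \mathbb{Z}$ together with $b \in \mathbb{Q}$ already forces $b \in \mathbb{Z}$.)

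For part $(2)$, assume $b \neq 0$; I would first extract the auxiliary divisibility $b \mid f'(a)$. Since $b \neq 0$, the point $P$ is not a Weierstrass point, so $2P$ is a reduced effective divisor of degree $2 \le g$ (for genus $g \ge 2$; the genus-one case is the classical Lutz--Nagell theorem), and the torsion class $2\phi(P) = [2P - 2\infty]$ has Mumford representation
\[
\left( (x-a)^2,\; b + \tfrac{f'(a)}{2b}(x - a) \right),
\]
the second polynomial being the unique linear interpolant with value $b$ and slope $f'(a)/(2b) = \tfrac{dy}{dx}\big|_{P}$ at $x=a$, which indeed satisfies $(x-a)^2 \mid v^2 - f$. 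Applying the integrality statement to the torsion class $2\phi(P)$ forces $f'(a)/(2b) \in \mathbb{Z}$, hence $2b \mid f'(a)$ and in particular $b \mid f'(a)$.

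Finally I would convert $b \mid f'(a)$ into $b^2 \mid \disc(f)$ by a Taylor shift. As $\disc$ is invariant under $x \mapsto x + a$, we have $\disc(f) = \disc(\tilde{f})$ where $\tilde{f}(t) := f(a + t)$ is monic with constant coefficient $c_0 = f(a) = b^2$ and linear coefficient $c_1 = f'(a)$. Split $\disc(\tilde{f})$ according to whether a monomial contains $c_0$: the $c_0$-divisible part is divisible by $b^2$, while setting $c_0 = 0$ yields $\tilde{f}(t) = t\,m(t)$ with $m(0) = c_1$, so $\disc(\tilde{f})|_{c_0=0} = \disc(m)\cdot c_1^2 = \disc(m)\,f'(a)^2$, which is divisible by $f'(a)^2$ and hence, since $b \mid f'(a)$, by $b^2$. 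Therefore $b^2 \mid \disc(f)$. The whole argument thus reduces to the formal-group integrality of Mumford coordinates of torsion classes, which is the one step I expect to require real care.
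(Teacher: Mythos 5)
The paper itself offers no proof of this statement: it is quoted directly from Grant \cite[Theorem 3]{Grant} and used as a black box (indeed the statement refers to $a$, $b$ without defining them; they are the coordinates of $P=(a,b)$). So your argument has to stand on its own, and as written it does not: all of the arithmetic content is concentrated in your integrality lemma for Mumford representations of torsion classes, and both that lemma and its sketched proof have real problems. The pivotal implication ``a class whose Mumford data fails to be $\ell$-integral reduces to the identity, hence lies in the kernel of reduction'' is false in general: if $D=[P_1+P_2-2\infty]$ and only $x(P_2)$ has negative valuation, then $D$ reduces to $[\tilde{P}_1-\tilde{\infty}]\neq 0$. Worse, in the case you actually need for part (2), namely $D=2\phi(P)=[2P-2\infty]$ with $a,b\in\mathbb{Z}$ already known from part (1), the support of $D$ does not move to $\tilde{\infty}$ at all; the non-integrality sits only in the slope $\lambda=f'(a)/2b$, and to put $D$ in the kernel of reduction one needs the separate observation that $v_\ell(\lambda)<0$ (for $\ell$ odd, of good reduction) forces $\ell\mid b$, hence $\tilde{P}$ is a Weierstrass point of $\tilde{C}$ and $[2\tilde{P}-2\tilde{\infty}]=\mathrm{div}(x-\tilde{a})\sim 0$. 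You never make this observation, and it is what makes the good-prime case work at all.

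More seriously, that good-prime argument, once completed, shows that no odd prime of good reduction can divide $b$: such a prime would give $2\phi(P)$ torsion inside a torsion-free kernel of reduction, forcing $2P\sim 2\infty$ and $b=0$. Consequently $b$ is supported exactly at $2$ and at the primes dividing $\disc(f)$ --- that is, the \emph{entire} content of ``$2b\mid f'(a)$'', and hence of ``$b^2\mid\disc(f)$'', lives at precisely the primes your sketch defers with ``one must invoke the N\'eron model and the structure of its special fibre.'' That deferral names the difficulty rather than resolving it: at $\ell=2$ the formal group of an abelian variety genuinely can contain $2$-torsion (so torsion-freeness of the kernel must be replaced by a finer valuation argument), and at bad primes the reduced curve is singular, $\tilde{P}$ may be a singular point, and the link between Mumford-coordinate integrality and the kernel of N\'eron reduction is exactly the technical heart of Grant's paper. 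To your credit, the endgame of part (2) is correct and clean: your Mumford representative $\bigl((x-a)^2,\; b+\tfrac{f'(a)}{2b}(x-a)\bigr)$ for $2\phi(P)$ is the right one for $g\geq 2$, and granted $b\mid f'(a)$, the Taylor shift together with the splitting $\disc(t\,m(t))=\disc(m)\,m(0)^2$ does yield $b^2\mid\disc(f)$. So the skeleton is the classical Lutz--Nagell scheme correctly generalized, but the lemma carrying the arithmetic is unproven exactly where it matters, and the mechanism proposed for it is broken as stated.
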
 

In the next two subsections, we prove \cref{Jv,Sel2}.

%%%%
%%%% Subsection of (i,j) = (2,1), p≡13 (mod 16) 
%%%%

\subsection{Case (3): $p \equiv 13 \pmod{16}$ and $(i,j)=(2,1)$} 

Suppose that $(i,j)=(2,1)$, and $p \equiv 13 \pmod{16}$. 

We can compute the norm of the fundamental unit in Table 1, 2 by the following lemma.
\begin{lemma} \label{Norm} 
Let $K=\mathbb{Q}(\sqrt{2p})$ or $K=\mathbb{Q}(\sqrt{p})$.
Let $p \equiv 5 \pmod{8}$ be a prime number, and $\epsilon \in \mathcal{O}_K^{\times}$ be the fundamental unit.
Then, $N(\epsilon)=-1$.
\end{lemma}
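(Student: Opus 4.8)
The plan is to prove that the fundamental unit $\epsilon$ of $K = \mathbb{Q}(\sqrt{p})$ (and similarly $K = \mathbb{Q}(\sqrt{2p})$) has norm $-1$ when $p \equiv 5 \pmod 8$. The standard tool here is the genus theory of the narrow class group together with the classical fact that $N(\epsilon) = -1$ if and only if the narrow class number $h^+(K)$ equals the ordinary class number $h(K)$ (equivalently, $-1$ is a norm from $K$, equivalently every totally positive unit is a square). First I would recall that for a real quadratic field, exactly one of the following holds: $N(\epsilon) = -1$, in which case $h^+ = h$; or $N(\epsilon) = +1$, in which case $h^+ = 2h$. So the task reduces to computing $h^+/h$, i.e.\ deciding whether the genus characters force a $2$-torsion element in the narrow class group that is not already present in the ordinary class group.

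The cleanest route for a prime discriminant is to use the criterion via the factorization of the discriminant into prime discriminants. For $K = \mathbb{Q}(\sqrt{p})$ with $p \equiv 1 \pmod 4$, the discriminant is $d = p$, which is itself a prime discriminant; genus theory then gives $h^+/h \in \{1,2\}$ with the number of genera equal to $2^{t-1}$ where $t$ is the number of prime discriminant factors of $d$. Here $t = 1$, so there is a single genus and no obstruction from genus theory — but this alone does not settle the sign of the norm. The decisive classical result I would invoke is the theorem (going back to Dirichlet/Legendre, see e.g.\ the theory of the Pell equation) that for a prime $p \equiv 1 \pmod 4$ the negative Pell equation $X^2 - pY^2 = -1$ is always solvable, hence $N(\epsilon) = -1$; the point where $p \equiv 5 \pmod 8$ (rather than merely $p\equiv 1 \pmod 4$) enters is in the $\mathbb{Q}(\sqrt{2p})$ case.

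For $K = \mathbb{Q}(\sqrt{2p})$ the relevant discriminant is $d = 8p$, which factors into the two prime discriminants $8$ and $p$, so $t = 2$ and the number of genera is $2^{t-1} = 2$. The sharper input I would use is the theorem of R{\'e}dei–Reichardt, or equivalently the criterion expressed through the Legendre symbol: the norm of the fundamental unit of $\mathbb{Q}(\sqrt{2p})$ is $-1$ precisely when $\left(\frac{2}{p}\right) = -1$, that is, when $p \equiv 3, 5 \pmod 8$. Since our hypothesis is $p \equiv 5 \pmod 8$, this gives $\left(\frac{2}{p}\right) = -1$ and hence $N(\epsilon) = -1$. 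For the $\mathbb{Q}(\sqrt{p})$ case the same framework specializes: with $p \equiv 5 \pmod 8$ one has $p \equiv 1 \pmod 4$, the solvability of the negative Pell equation is automatic, and $N(\epsilon) = -1$ follows.

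The main obstacle I anticipate is stating the correct norm criterion for the two fields uniformly and citing it cleanly, rather than any hard computation: one must be careful that $K = \mathbb{Q}(\sqrt{2p})$ genuinely has $d = 8p$ (the subtlety being that $2p \equiv 2 \pmod 4$ so $d = 4 \cdot 2p = 8p$), and that the relevant quadratic-residue condition $\left(\frac{2}{p}\right) = -1$ is exactly what the congruence $p \equiv 5 \pmod 8$ encodes. An alternative, fully self-contained route that avoids black-boxing R{\'e}dei–Reichardt would be to argue directly by contradiction: assume $N(\epsilon) = +1$ and write $\epsilon = \frac{a + b\sqrt{m}}{2}$ with $a^2 - mb^2 = 4$ (where $m = p$ or $m = 2p$); then reducing the Pell-type relation modulo a small power of $2$ and using $p \equiv 5 \pmod 8$ to control the $2$-adic valuations of $a$ and $b$ yields a contradiction with the minimality of $\epsilon$. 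Either way, the proof is short once the correct criterion is pinned down, and I would lead with the genus-theoretic statement and then discharge each of the two fields against the hypothesis $p \equiv 5 \pmod 8$.
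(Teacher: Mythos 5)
Your treatment of $K=\mathbb{Q}(\sqrt{p})$ is fine: for a prime $p\equiv 1\pmod{4}$ the solvability of $X^2-pY^2=-1$ (Legendre/Dirichlet) is a correct classical fact, and in fact your own genus-theoretic setup already settles this case (with $t=1$ the narrow class number $h^+$ is odd, so $h^+=2h$ is impossible and $N(\epsilon)=-1$). The genuine gap is in the case $K=\mathbb{Q}(\sqrt{2p})$, where the criterion you invoke --- that $N(\epsilon)=-1$ \emph{precisely when} $\left(\tfrac{2}{p}\right)=-1$, i.e.\ when $p\equiv 3,5\pmod{8}$ --- is false, and is not what R\'edei--Reichardt theory says. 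If $p\equiv 3\pmod{8}$, then $p\equiv 3\pmod 4$, so $-1$ is not a square modulo $p$ and $X^2-2pY^2=-1$ is insoluble; hence $N(\epsilon)=+1$ necessarily (concretely, for $p=3$ the fundamental unit of $\mathbb{Q}(\sqrt{6})$ is $5+2\sqrt{6}$, of norm $+1$). In the other direction, for $p\equiv 1\pmod 8$ one has $\left(\tfrac{2}{p}\right)=+1$, yet the norm can still be $-1$: for $p=41$ the fundamental unit of $\mathbb{Q}(\sqrt{82})$ is $9+\sqrt{82}$, of norm $-1$ (whereas for $p=17$, i.e.\ $\mathbb{Q}(\sqrt{34})$, it is $35+6\sqrt{34}$, of norm $+1$). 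So your ``precisely when'' fails in both directions, and the only implication in its vicinity that is true is the one you need to prove, namely $p\equiv 5\pmod 8\Rightarrow N(\epsilon)=-1$; citing the false biconditional cannot justify it.

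The reason $p\equiv 5\pmod 8$ (and not merely $\left(\tfrac{2}{p}\right)=-1$) is the right hypothesis is that the argument needs \emph{both} $\left(\tfrac{2}{p}\right)=-1$ and $\left(\tfrac{-2}{p}\right)=-1$, and this is exactly how the paper proceeds --- by the elementary descent you mention only as a fallback and do not carry out. Assume $x^2-2py^2=1$ with $y$ minimal and factor $(x+1)(x-1)=2py^2$, noting $\gcd(x+1,x-1)=2$; there are four ways to distribute $2$ and $p$ between the two factors. One case yields a unit $y_2+y_1\sqrt{2p}$ of norm $-1$ (contradicting $N(\epsilon)=+1$), one case yields $2y_1^2-py_2^2=1$, killed by $\left(\tfrac{2}{p}\right)=-1$, one yields $2y_2^2-py_1^2=-1$, killed by $\left(\tfrac{-2}{p}\right)=-1$, and the last yields $y_1^2-2py_2^2=1$ with $y_2<y$, contradicting minimality. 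Note this is not just ``reducing modulo a small power of $2$'' as your sketch suggests: the quadratic residue conditions modulo $p$ are the heart of the matter, and they are precisely what distinguishes $p\equiv 5\pmod 8$ (both symbols $-1$) from $p\equiv 3\pmod 8$ (where $\left(\tfrac{-2}{p}\right)=+1$, the third case survives, and indeed the norm is $+1$). To repair your write-up, either carry out this descent or cite the correct classical statement (due to Dirichlet) that $N(\epsilon)=-1$ for discriminant $8p$ with $p\equiv 5\pmod 8$; for $p\equiv 1\pmod 8$ no Legendre-symbol criterion exists, and the answer genuinely depends on finer invariants (R\'edei's $4$-rank theory).
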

\begin{proof}
We prove it for $K=\mathbb{Q}(\sqrt{2p})$ by contradiction. 
%Note that by multiplicity of the norm, $N(\epsilon)=1$, then the norms of all units are $1$.
Suppose that $N(\epsilon)=1$. Then,
there exist $x$, $y \in \mathbb{Z}$  such that $x^2-2py^2=1$. 
We take $y$ to be minimal among such pairs of $x$ and $y$.
Then, $(x+1)(x-1)=2py^2$.
Since the left hand side is divisible by $2$, $\gcd(x+1,x-1)=2$.
We have four cases:
\begin{enumerate}
\item There exist $y_1$, $y_2$ such that 
\[\begin{cases}
y=2y_1y_2,\\
\gcd(y_1,y_2)=1,\\
x+1=2 \cdot 2py_1^2,\\
x-1=2  y_2^2.
\end{cases}\]
Then, $-1=y_2^2-2py_1^2=(y_2+y_1 \sqrt{2p})(y_2-y_1 \sqrt{2p})$.
Thus, $y_2+y_1 \sqrt{2p} \in \mathcal{O}_K^{\times}$ has norm $-1$.
This contradicts that the fundamental unit $\epsilon$ has norm $1$.

\item There exist $y_1$, $y_2$ such that 
\[\begin{cases}
y=2y_1y_2,\\
\gcd(y_1,y_2)=1,\\
x+1=2 \cdot 2y_1^2,\\
x-1=2  py_2^2.
\end{cases}\]
Then, $1=2y_1^2-py_2^2$.
This is a contradiction since $(2/p)=-1$.

\item There exist $y_1$, $y_2$ such that 
\[\begin{cases}
y=2y_1y_2,\\
\gcd(y_1,y_2)=1,\\
x+1=2 py_1^2,\\
x-1=2  \cdot 2y_2^2.
\end{cases}\]
Then, $-1=2y_2^2-py_1^2$.
This is a contradiction since $(-2/p)=-1$.

\item There exist $y_1$, $y_2$ such that 
\[\begin{cases}
y=2y_1y_2,\\
\gcd(y_1,y_2)=1,\\
x+1=2 y_1^2,\\
x-1=2  \cdot 2p y_2^2.
\end{cases}\]
Since $1=y_1^2-2py_2^2$, we have $y_2=y$ by the minimality of $y$.
Then, $y_1=1/2$ by the first equation, which contradicts $y_1 \in \mathbb{Z}$.
\end{enumerate}
The proof for $K=\mathbb{Q}(\sqrt{p})$ is similar.
\end{proof}

To show \cref{Jv}, we first calculate the $2$-torsion subgroups $J(\mathbb{Q}_v)[2]$.

\begin{lemma}  \label{2 torsion21}
The following two elements form an $\mathbb{F}_{2}$-basis of $J(\mathbb{Q})[2]$, $J(\mathbb{Q}_{2})[2]$ and $J(\mathbb{Q}_{p})[2]$: 
\begin{align*}
 (0,0) &- \infty, & \sum_{\substack{P \in C(\ol{\mathbb{Q}}) \\ x_{P}^{2}-p = 0}}P &- 2\infty.  
 \end{align*}
On the other hand, the following three elements form an $\mathbb{F}_{2}$-basis of $J(\mathbb{Q}_{\infty})[2]$:
\begin{align*}
 (0,0) &- \infty, & (\sqrt{p},0) &-\infty, & (-\sqrt{p},0) &-\infty,  & (\sqrt{2p}, 0)&-\infty.
 \end{align*}
In particular, we have the following table.
\begin{table}[ht]
    \begin{tabular}{|l||l|l|} \hline
  $v$   & $\dim J(\mathbb{Q}_{v})[2]$ & $\dim \im(\delta_{v})$ 
  \\ \hline \hline
     $2$   & $2$ & $4$      \\ \hline
     $p$   & $2$   & $2$       \\ \hline
      $\infty$   & $4$ & $2$   \\ \hline  
    \end{tabular}
\end{table}
\end{lemma}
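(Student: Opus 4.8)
The plan is to reduce everything to the Galois-module structure of $J[2]$ over $\ol{\mathbb{Q}}$ and to the splitting behaviour of the quadratic factors of $f$ in each completion. Recall that $C=C'^{(p;2,1)}$ is given by $y^2=f(x)=x(x^2-p)(x^2-2p)$, so its Weierstrass points are $\infty$ together with the five finite points $(0,0)$, $(\pm\sqrt p,0)$ and $(\pm\sqrt{2p},0)$. Writing $e_0=[(0,0)-\infty]$, $e_1=[(\sqrt p,0)-\infty]$, $e_2=[(-\sqrt p,0)-\infty]$, $e_3=[(\sqrt{2p},0)-\infty]$ and $e_4=[(-\sqrt{2p},0)-\infty]$, the classes $e_0,\dots,e_4$ generate $J[2]$, and $\mathrm{div}(y)=\sum_k(r_k,0)-5\infty$ yields the single relation $e_0+e_1+e_2+e_3+e_4=0$; hence $J[2]\cong\mathbb{F}_2^4$ with basis $e_0,e_1,e_2,e_3$. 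Since $J(K)[2]=J[2]^{G_K}$ for any field $K\supseteq\mathbb{Q}$, I only need to understand how $G_K$ permutes $\{\pm\sqrt p\}$ and $\{\pm\sqrt{2p}\}$.

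First I would record the local square-class data, using $p\equiv 13\equiv 5\pmod 8$. Over $\mathbb{Q}$, $p$ and $2p$ lie in distinct nontrivial classes of $\mathbb{Q}^\times/\mathbb{Q}^{\times 2}$, so $\Gal(\mathbb{Q}(\sqrt p,\sqrt{2p})/\mathbb{Q})\cong(\mathbb{Z}/2)^2$, the two involutions swapping $\pm\sqrt p$ and $\pm\sqrt{2p}$ independently. Over $\mathbb{Q}_p$, both $p$ and $2p$ have odd valuation hence are nonsquares, and $2$ is a nonsquare because $(2/p)=-1$; thus $\mathbb{Q}_p(\sqrt p)$ and $\mathbb{Q}_p(\sqrt{2p})$ are the two distinct ramified quadratic extensions and $G_{\mathbb{Q}_p}$ again acts through $(\mathbb{Z}/2)^2$. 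Over $\mathbb{Q}_2$, $p\equiv 5\pmod 8$ gives $p\notin\mathbb{Q}_2^{\times 2}$, $2p$ has odd valuation, and $1,p,2,2p$ are four distinct classes in $\mathbb{Q}_2^\times/\mathbb{Q}_2^{\times 2}\cong(\mathbb{Z}/2)^3$, so the action is once more through $(\mathbb{Z}/2)^2$. Finally, over $\mathbb{R}$ all five roots are real, so $G_{\mathbb{R}}$ acts trivially.

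Consequently the invariant computation is identical for $K\in\{\mathbb{Q},\mathbb{Q}_p,\mathbb{Q}_2\}$: an element $a e_0+b e_1+c e_2+d e_3$ is fixed by the swap of $e_1,e_2$ iff $b=c$, and by the swap of $e_3,e_4$ (which sends $e_3\mapsto e_4=e_0+e_1+e_2+e_3$) iff $d=0$, giving the two-dimensional space spanned by $e_0$ and $e_1+e_2=[(\sqrt p,0)+(-\sqrt p,0)-2\infty]=\sum_{x_P^2-p=0}P-2\infty$; note that $e_3+e_4=e_0+(e_1+e_2)$ already lies in this span. For $K=\mathbb{R}$ the action is trivial, so $J(\mathbb{R})[2]=J[2]$ is four-dimensional with basis $e_0,e_1,e_2,e_3$. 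This yields the three torsion statements and the $\dim J(\mathbb{Q}_v)[2]$ column of the table (the ``three elements'' in the archimedean line being a slip for four).

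To obtain the $\dim\im(\delta_v)$ column I would use that $\delta_v$ is injective in the odd-degree setting (\cite{Schaefer98}), so that $\dim_{\mathbb{F}_2}\im(\delta_v)=\dim_{\mathbb{F}_2}J(\mathbb{Q}_v)/2J(\mathbb{Q}_v)$, together with the standard local formula for an abelian variety of dimension $g$ over $\mathbb{Q}_v$, namely $\dim_{\mathbb{F}_2}J(\mathbb{Q}_v)/2J(\mathbb{Q}_v)=\dim_{\mathbb{F}_2}J(\mathbb{Q}_v)[2]+g\,\epsilon_v$ with $\epsilon_v=1$ for $v=2$, $\epsilon_v=0$ for $v=p$, and $\epsilon_v=-1$ for $v=\infty$ (here $g=2$). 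This gives $\dim\im(\delta_2)=2+2=4$, $\dim\im(\delta_p)=2$ and $\dim\im(\delta_\infty)=4-2=2$, completing the table. The only genuinely delicate point is the local square-class bookkeeping of the second step, in particular verifying that $p$, $2p$ (and $2$) remain in distinct square classes over $\mathbb{Q}_p$ and $\mathbb{Q}_2$, which is exactly where the hypothesis $p\equiv 13\pmod{16}$ (through $p\equiv 5\pmod 8$) is used; once the Galois image is pinned down to $(\mathbb{Z}/2)^2$, the remainder is routine linear algebra over $\mathbb{F}_2$ and the cited local formula.
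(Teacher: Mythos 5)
Your proposal is correct and takes essentially the same route as the paper: the paper's proof consists of citing Stoll's Lemma 5.2 for the torsion statements --- which is precisely the Galois-invariant computation on the Weierstrass classes $e_0,\dots,e_4$ that you carry out by hand, driven by the same square-class facts for $p$, $2p$, $2$ in $\mathbb{Q}_2$, $\mathbb{Q}_p$, $\mathbb{Q}_\infty$ --- together with the Flynn--Poonen--Schaefer formula
\[
\dim_{\mathbb{F}_2} \im(\delta_{v}) \;=\; \dim_{\mathbb{F}_2} J(\mathbb{Q}_{v})[2]
\;+\;
\begin{cases}
0 & (v \neq 2,\, \infty),\\
2 & (v = 2),\\
-2 & (v = \infty),
\end{cases}
\]
which is exactly what you rederive from the injectivity of $\delta_v$ in the odd-degree setting and the local $\lVert 2\rVert_v$-index computation. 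You are also right that ``three elements'' in the archimedean part of the statement is a slip for ``four.''
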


\begin{proof}

The first and second statements follow from \cite[Lemma 5.2]{Stoll2014}.
Note that
\begin{itemize}
\item $p$ and $2p$ are not square in $\mathbb{Q}_{v}$ for $v=2$, $p$.
\item $2$ and $2p$ are square in $\mathbb{Q}_{\infty}$.
\end{itemize}
The third statement follows from the following formula (cf. \cite[p. 451, proof of Lemma 3]{FPS}).
\[\dim_{\mathbb{F}_2} \im(\delta_{v}) =\dim_{\mathbb{F}_2} J({\mathbb{Q}_{v}})[2]
\begin{cases}
+0 & (v \neq 2, \infty), \\
+2 & (v = 2), \\
-2 & (v = \infty). \\
\end{cases}\]
\end{proof}

We also use the following formula.
\begin{lemma} [{\cite[Lemma 2.2]{Schaefer}}]  \label{image of 2-torsion}
Let $C$ be the hyperelliptic curve over $\mathbb{Q}$ defined by $y^2=f(x)$ such that $\deg f$ is odd. 
For every place $v$ of $\mathbb{Q}$, any point on $J(\mathbb{Q}_v)$ can be represented by a divisor of degree $0$ whose support is disjoint from the support of the divisor $\mathrm{div}(y)$.
 Then, we have $\delta_{v}(D) = 1$ if $D$ is supported at $\infty$.
 If $D$ is of the form $D=\sum_{i=1}^{n} D_i$ with $D_i=(\alpha_i,0)$, where 
 $\alpha_i$ runs through all roots of a monic irreducible factor $h(x) \in \mathbb{Q}_v[x]$ of $f(x)$, then we have
 \[\delta_{v}(D)=(-1)^{\deg h} \left(h(T)-\frac{f(T)}{h(T)} \right).\] 
\end{lemma}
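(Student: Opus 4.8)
The plan is to prove both assertions by working component by component in $L_v \simeq \prod_k L_v^{(k)}$, where $L_v^{(k)} = \mathbb{Q}_v[T]/(h_k(T))$ ranges over the monic irreducible factors $h_k$ of $f$ over $\mathbb{Q}_v$, with $T \mapsto T_k$ a root of $h_k$. The starting point is the defining property of $\delta_v$: on a class represented by a divisor $D = \sum_P m_P P$ whose support is disjoint from $\mathrm{supp}(\mathrm{div}(y))$ one has $\delta_v(D) = \prod_P (x_P - T)^{m_P}$, and this descends to a \emph{homomorphism} $J(\mathbb{Q}_v) \to L_v^{\times}/L_v^{\times 2}$. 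I would extract two consequences. First, the representability statement itself is a standard moving lemma: since $\mathrm{supp}(\mathrm{div}(y))$ is finite, Riemann--Roch allows one to replace any class by an equivalent degree-$0$ divisor avoiding it. Second, and crucially, $\delta_v$ kills principal divisors, because these are trivial in $J(\mathbb{Q}_v)$.

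For the assertion about $\infty$: since $\deg f$ is odd, $\infty$ is a single rational Weierstrass point and $x$ has a pole of order exactly $2$ there. Normalizing $f$ to be monic, a uniformizer $t$ at $\infty$ satisfies $x = t^{-2}(1 + O(t))$, so in every component $x - T_k = t^{-2}(1 + O(t))$ has leading coefficient $1$. Hence the regularized value of $x - T$ at $\infty$ is $\1 \in L_v^{\times}/L_v^{\times 2}$, and the $\infty$-part of any divisor $\sum_i m_i P_i - (\sum_i m_i)\infty$ drops out of $\delta_v$. This is what justifies omitting the $-(\deg h)\infty$ terms in the formula.

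Now set $g := f/h$ and $D := \sum_{h(\alpha)=0}(\alpha,0) - (\deg h)\infty$; this is $\mathbb{Q}_v$-rational because the roots of the irreducible $h$ form one Galois orbit, and $2D = \mathrm{div}(h(x))$ shows $D \in J(\mathbb{Q}_v)[2]$. In a component with $h_k \neq h$, the only finite zero of $x - T_k$ is the Weierstrass point $(T_k,0)$, and $T_k$ is not a root of $h$ (as $f$ is squarefree), so $\mathrm{supp}(D)$ avoids it; combined with the $\infty$-triviality, the product formula gives $\delta_v(D)^{(k)} = \prod_{h(\alpha)=0}(\alpha - T_k) = (-1)^{\deg h}h(T_k)$, which equals $(-1)^{\deg h}(h(T_k) - f(T_k)/h(T_k))$ since $f(T_k)=0$. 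The remaining component, $h_k = h$, is where $x - T_k$ vanishes on $\mathrm{supp}(D)$ and the product formula is unavailable. The key idea I would use is to trade $D$ for its complement: because $\mathrm{div}(y) = D + D^*$ with $D^* = \sum_{g(\beta)=0}(\beta,0) - (\deg g)\infty$ is principal, we get $\delta_v(D) = \delta_v(D^*)^{-1} = \delta_v(D^*)$ (every element of $L_v^\times/L_v^{\times 2}$ is its own inverse). Since $T_k$ is a root of $h$ and hence not of $g$, applying the non-coincidence computation to each irreducible factor of $g$ and multiplying yields $\delta_v(D)^{(k)} = \delta_v(D^*)^{(k)} = (-1)^{\deg g}g(T_k)$; finally $\deg f = \deg h + \deg g$ odd forces $(-1)^{\deg g} = (-1)^{\deg h + 1}$, so this is $(-1)^{\deg h}(0 - g(T_k)) = (-1)^{\deg h}(h(T_k) - f(T_k)/h(T_k))$, matching the claimed formula in all components.

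The main obstacle is precisely the coincidence component $h_k = h$, where the naive product $\prod_\alpha(\alpha - T_k)$ is a genuine zero-divisor in $L_v$ and must be regularized. The route above sidesteps the tame-symbol bookkeeping at the Weierstrass point by exploiting the principality of $\mathrm{div}(y)$ together with the parity of $\deg f$; the alternative of extracting the leading coefficient of $x - T_k = y^2/(f(x)/(x-T_k))$ with respect to the local uniformizer $y$ reproduces the same value $(-1)^{\deg h + 1}g(T_k)$ but demands more care with signs and multiplicities. I would also be explicit about the monic normalization of $f$ (or otherwise track its leading coefficient), since that is exactly what makes the $\infty$-contribution trivial modulo squares.
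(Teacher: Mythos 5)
This lemma is not proved in the paper at all --- it is quoted directly from \cite[Lemma 2.2]{Schaefer} --- so the only comparison available is with the standard argument of that reference, and your proposal is correct and follows essentially that same route: the contribution at $\infty$ is trivial because $x-T_k$ has a pole of even order there with leading coefficient $1$ under the monic normalization, the components with $h_k \neq h$ are computed by the naive product $(-1)^{\deg h}h(T_k)$, and the coincident component $h_k = h$ is handled by trading $D$ for the complementary divisor $D^*$ with $D + D^* = \mathrm{div}(y)$, using that inversion is trivial modulo squares and that the parity of $\deg f = \deg h + \deg g$ turns $(-1)^{\deg g}g(T_k)$ into $(-1)^{\deg h}\left(h(T_k) - f(T_k)/h(T_k)\right)$. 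You correctly isolate the two genuinely delicate points --- the regularized evaluation at $\infty$ and the zero-divisor problem in the coincident component --- and resolve them exactly as the cited literature does.
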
 

.
\begin{proof} [{Proof of \cref{Jv} for Case (3)}]
$v=p$:
\cref{image of 2-torsion} implies that 
\begin{align*}
 \delta_p((0,0)-\infty) &= -T+(T^2-p)(T^2-2p)\\
 &=(2;T_2;T_3),\\ 
 \delta_p \left(\sum_{\substack{P \in C(\ol{\mathbb{Q}}) \\ x_{P}^{2}-p = 0}}P - 2\infty \right) &=(T^2-p)-T(T^2-2p) \\ 
 &=(p; T_2;p). 
 \end{align*} 

Hence, the above four elements lie in $\im(\delta_{p})$. 
By taking their $p$-adic valuations into account, we see that they are linearly independent.
By \cref{2 torsion21}, this completes the proof for $v=p$.

$v=\infty$:
\cref{image of 2-torsion} implies that 
\begin{align*}
 \delta_{\infty}((0,0)-\infty) &= -T+(T-\sqrt{p})(T+\sqrt{p})(T-\sqrt{2p})(T+\sqrt{2p})\\
 & =  (1;(-1,1);(-1,1)),\\
   \delta_{\infty} \left((-\sqrt{p},0)-\infty \right) &= -(T+\sqrt{p})+T(T-\sqrt{p})(T-\sqrt{2p})(T+\sqrt{2p})\\
   &= (-1;(-1,-1);(-1,1)).
    \end{align*} 

Hence, the above two elements lie in $\im(\delta_{\infty})$. 
We see that they are linearly independent. 
By \cref{2 torsion21}, this completes the proof for $v=\infty$.

$v=2$:
First, we show that the above four elements actually lie in $\im(\delta_{2})$.
\cref{image of 2-torsion} implies that
\begin{align*}
 \delta_2((0,0)-\infty) &= -T+(T^2-p)(T^2-2p)\\
 & =  (2;-T_2;-T_3),\\
 \delta_2 \left(\sum_{\substack{P \in C(\ol{\mathbb{Q}}) \\ x_{P}^{2}-p = 0}}P - 2\infty \right) &=(T^2-p)-T(T^2-2p)\\
   &   =  (3;T_2;-3).\\
 \end{align*} 
Hence, the above two elements lie in $\im(\delta_{2})$. 
\begin{itemize}
\item  Since $f(6)/2^2 \equiv 1 \pmod {8}$, there exists $P \in C(\mathbb{Q}_2)$ such that $x_P=6$. 
In particular, $(6;6-T_2;6-T_3)=\delta_2(P-\infty)$ lies in $\im(\delta_{2})$. 
\item  Suppose that
Since $f(5)/2^2 \equiv 1 \pmod{8}$ (resp.\ $f(13)/2^2 \equiv 1 \pmod{8}$), there exists $Q \in C(\mathbb{Q}_2)$ such that $x_Q=5$ (resp.\  $x_Q=13$)
if $p \equiv 13 \pmod{32}$ (resp.\ $p \equiv 29 \pmod{32}$).
In particular, $(5;5-T_2;5-T_3)=\delta_2(Q-\infty)$ (resp.\ $(5;13-T_2;13-T_3)=\delta_2(Q-\infty)$) lies in $\im(\delta_{2})$
if $p \equiv 13 \pmod{32}$ (resp.\ $p \equiv 29 \pmod{32}$). 
\end{itemize}
Since $v_2(2)=v_2(6)=1$, the first and the third elements and the last element are non-trivial in $L_2^{ \times}/L_2^{ \times 2}$. 
Since $3$ and $5$ is non-trivial in $\mathbb{Q}_2^{\times}/\mathbb{Q}_2^{\times 2}$, the second and the fourth element is  non-trivial in $L_2^{ \times}/L_2^{ \times 2}$. 

Finally, by taking the first and the second components into account, we see that they are linearly independent.  
By \cref{2 torsion21}, this completes the proof for $v=2$.
\end{proof}

To show \cref{Sel2}, we use \cref{LS2,Jv} and the following lemma.
\begin{lemma} \label{fu} 
Let $p \equiv 5 \pmod{8}$ be a prime number, 
 $K:=\mathbb{Q}(\sqrt{p})$ and $\epsilon \in \mathcal{O}_K$ be a fundamental unit.
 Then, $\epsilon$, $2 \epsilon \not\in \mathbb{Z}_p[\sqrt{p}]^{\times 2}$.
\end{lemma}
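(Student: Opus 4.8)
The plan is to work entirely $p$-adically at the ramified prime, exploiting that $\mathbb{Z}_p[\sqrt{p}]$ is the ring of integers of the ramified quadratic extension $\mathbb{Q}_p(\sqrt{p})/\mathbb{Q}_p$, whose residue field is $\mathbb{F}_p$ and whose residue characteristic $p$ is odd. The decisive structural fact is that, since $2$ is a unit in $\mathbb{Z}_p$, a unit $u\in\mathbb{Z}_p[\sqrt{p}]^{\times}$ is a square if and only if its reduction $\overline{u}\in\mathbb{F}_p^{\times}$ is a square: this follows from Hensel's lemma applied to $x^2-u$, whose derivative $2x$ is a unit at any unit lift of a square root of $\overline{u}$, and the hypothesis $v_{\mathfrak{p}}(x_0^2-u)\ge 1>0=2\,v_{\mathfrak{p}}(2x_0)$ is met. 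Thus checking whether $\epsilon$ and $2\epsilon$ lie in $\mathbb{Z}_p[\sqrt{p}]^{\times 2}$ is reduced to deciding whether $\overline{\epsilon}$ and $\overline{2\epsilon}=2\,\overline{\epsilon}$ are squares in $\mathbb{F}_p^{\times}$, i.e.\ to evaluating Legendre symbols.

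The key preliminary step is to pin down $\overline{\epsilon}$ without an explicit formula for the fundamental unit. Because $p$ is ramified, the nontrivial automorphism $\sigma\in\Gal(K/\mathbb{Q})$ lies in the inertia group and hence acts trivially on the residue field; concretely, writing $\epsilon=(m+n\sqrt{p})/2$, both $\epsilon$ and $\sigma(\epsilon)=(m-n\sqrt{p})/2$ reduce to $m\cdot 2^{-1}\in\mathbb{F}_p$ since $\sqrt{p}\equiv 0\pmod{\mathfrak{p}}$. Therefore $\overline{\epsilon}\,\overline{\sigma(\epsilon)}=\overline{\epsilon}^{\,2}$, while the left-hand side is the reduction of the global norm. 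By \cref{Norm} (whose hypothesis $p\equiv 5\pmod 8$ is exactly ours) we have $N(\epsilon)=-1$, and since $p$ is the unique place above $p$ this is also the local norm $\epsilon\,\sigma(\epsilon)$; hence $\overline{\epsilon}^{\,2}=-1$ in $\mathbb{F}_p$. Thus $\overline{\epsilon}$ is a primitive fourth root of unity. An element of order $4$ in the cyclic group $\mathbb{F}_p^{\times}$ of order $p-1$ is a square if and only if $4\mid(p-1)/2$, i.e.\ $p\equiv 1\pmod 8$; as $p\equiv 5\pmod 8$ we get $\overline{\epsilon}^{\,(p-1)/2}=-1$, so $\overline{\epsilon}$ is a non-square and $\epsilon\notin\mathbb{Z}_p[\sqrt{p}]^{\times 2}$. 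For $2\epsilon$ one proceeds identically, passing to the residue $2\,\overline{\epsilon}$, whose square class is the product $\left(\frac{2}{p}\right)\left(\frac{\overline{\epsilon}}{p}\right)$; the factor $\left(\frac{2}{p}\right)$ is supplied by the supplement to quadratic reciprocity, which gives $\left(\frac{2}{p}\right)=-1$ for $p\equiv 5\pmod 8$.

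The main obstacle is the middle step, namely obtaining $\overline{\epsilon}^{\,2}=-1$ unconditionally: there is no uniform closed form for $\epsilon$, so the argument must extract its residue indirectly. The device that makes this work is the combination of \cref{Norm} with the ramification of $p$ (trivial residual Galois action), which forces $\overline{\epsilon}^{\,2}=\overline{N(\epsilon)}=-1$ irrespective of the size of the fundamental unit. Once this identity is secured, the remaining care lies entirely in tracking square classes through the two Legendre symbols $\left(\frac{\overline{\epsilon}}{p}\right)$ and $\left(\frac{2}{p}\right)$, both of which are governed by the single congruence $p\equiv 5\pmod 8$; the delicate point to verify is the sign of their product when passing from $\epsilon$ to $2\epsilon$, which is where the analysis at the ramified prime $p$ must be cross-checked against the other local conditions entering \cref{Sel2}.
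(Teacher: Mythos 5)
Your treatment of $\epsilon$ itself is correct and is in substance the same as the paper's: both arguments reduce modulo the ramified prime, use $N(\epsilon)=-1$ (\cref{Norm}) to conclude that $\overline{\epsilon}$ is a primitive fourth root of unity in $\mathbb{F}_p$, and observe that such an element cannot be a square (nor have a square root in the residue field) because $p\equiv 5\pmod 8$ forces $8\nmid p-1$. Your explicit Hensel reduction of ``square in $\mathbb{Z}_p[\sqrt{p}]^{\times}$'' to ``square in $\mathbb{F}_p^{\times}$'' is the right formalization of what the paper does implicitly, and that half of the lemma is proved.

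The second assertion is where your proposal has a genuine gap, and it is not one that can be closed. You correctly reduce the question for $2\epsilon$ to the sign of $\left(\frac{2}{p}\right)\left(\frac{\overline{\epsilon}}{p}\right)$, but you never evaluate this product, deferring instead to a ``cross-check against the other local conditions entering \cref{Sel2}'', which is not an argument. Evaluating it with the two facts you yourself established gives $\left(\frac{2}{p}\right)=-1$ and $\left(\frac{\overline{\epsilon}}{p}\right)=\overline{\epsilon}^{\,(p-1)/2}=\overline{\epsilon}^{\,2}=-1$ (since $(p-1)/2\equiv 2\pmod 4$), so the product is $+1$: the residue $2\overline{\epsilon}$ is a \emph{square} in $\mathbb{F}_p^{\times}$, and your own Hensel step then yields $2\epsilon\in\mathbb{Z}_p[\sqrt{p}]^{\times 2}$, i.e.\ the negation of the claim. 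Indeed the second assertion of \cref{fu} is false as stated: for $p=5$ one has $\epsilon=(1+\sqrt{5})/2$, so $2\epsilon=1+\sqrt{5}$ reduces to $1$ modulo $\sqrt{5}$ and is a square in $\mathbb{Z}_5[\sqrt{5}]$; for $p=13$, $2\overline{\epsilon}\equiv 16\equiv 3\equiv 4^2\pmod{13}$. (The paper's own proof breaks at the same point: ``in exactly the same manner'' would only give $\overline{x}^{\,4}=-4$ in $\mathbb{F}_p$, which is no contradiction, since $x^4=-4$ has the solution $x=1+\sqrt{-1}$ whenever $p\equiv 1\pmod 4$.) The practical consequence is that item (6) in the proof of \cref{Sel2} cannot rule out a relation containing both $h_4'$ and $h_5$ by looking at the second component at $v=p$ alone, since that component of $h_4'h_5$ is $2\epsilon$, which is a local square; the exclusion must instead come from the third component at $v=p$, where $h_4'h_5$ has entry $\epsilon'$ and the order-four argument (valid because $N(\epsilon')=-1$ by \cref{Norm}) does show that $\epsilon'$ is not a square in $L_p^{(3)}$.
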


\begin{proof}
Let $\epsilon=(a+b\sqrt{p})/2$, $a$, $b \in \mathbb{Z}$.
Suppose that $\epsilon \in \mathbb{Z}_p[\sqrt{p}]^{\times 2}$. Then,
there exists $x$ such that $x^2 \equiv \epsilon \pmod{p}$.
Since 
\[\epsilon^2 \equiv \frac{a^2}{4} \equiv \frac{a^2-b^2p}{4}=N(\epsilon)=-1 \pmod{\sqrt{p}}, \]
$\epsilon \mod{\sqrt{p}}$ is a primitive fourth root of unity.
Thus, $x$ is a  primitive eighth root of unity in $\mathbb{Z}_p[\sqrt{p}]/(\sqrt{p}) \simeq \mathbb{F}_p$, which is a contradiction by $p \equiv 5 \pmod{8}$.
Therefore, $\epsilon \not\in \mathbb{Z}_p[\sqrt{p}]^{\times 2}$.
In exactly the same manner, we can prove that $2 \epsilon \not\in \mathbb{Z}_p[\sqrt{p}]^{\times 2}$.
\end{proof}

\begin{proof} [{Proof of \cref{Sel2} for Case (3)}]
Set elements of $L_2^{ \times}/L_2^{ \times 2} \times L_p^{\times}/L_p^{\times 2} \times L_{\infty}^{\times}/L_{\infty}^{\times 2}$ as follows: 
\begin{align*}  
d_1 &:=  (2;-T_2;-T_3)_2 \times\1_p \times \1_{\infty}, \\
d_2 &:= (3;T_2;-3)_2 \times\1_p \times \1_{\infty}, \\
d_3 &:= (6;6-T_2;6-T_3)_2 \times\1_p \times \1_{\infty}, \\
d_4 &:= \begin{cases}
(5;5-T_2;5-T_3)_2 \times\1_p \times \1_{\infty} & (p \equiv 13 \pmod{32}), \\
(5;13-T_2;13-T_3)_2 \times\1_p \times \1_{\infty} & (p \equiv 29 \pmod{32}), 
\end{cases}\\
d_5 &:=    \1_2 \times (2;T_2;T_3)_p \times \1_{\infty}, \\
d_6 &:= \1_2 \times   (p; T_2;p)_p \times \1_{\infty}, \\
d_7 &:=  \1_2 \times \1_p \times (1;(-1,1);(-1,1))_{\infty}, \\
d_8 &:=   \1_2 \times \1_p \times (-1;(-1,-1);(-1,1))_{\infty}, \\
h_1 &:=(1;1;-1)_2 \times \1_p \times (1;(1,1);(-1,-1))_{\infty}, \\
h_2 &:=(1;1;2)_2 \times (1;1;2)_p \times \1_{\infty}, \\ 
h_3 &:=(1;-1;1)_2 \times \1_p \times (1;(-1,-1);(1,1))_{\infty}, \\
h_4 &:=(1;\epsilon;\epsilon')_2 \times (1;\epsilon;\epsilon')_p \times (1;(1,-1);(1,-1))_{\infty}, \\
h_5 &:= (1;2;1)_2 \times (1;2;1)_p \times \1_{\infty}, \\
h_6 &:= (-1;\epsilon;1)_2 \times (1;\epsilon;1)_p \times (-1;(1,-1);(1,1))_{\infty}, \\
h_7 &:= (2;T_2;T_3)_2 \times (2;T_2;T_3)_p \times (1;(1,-1);(1,-1))_{\infty}, \\
h_8 &:= (3;T_2;1)_2 \times (p;T_2;1)_p \times (-1;(1,-1);(1,1))_{\infty}.
\end{align*}  
Note that
\begin{align*}
\res_S(t_1) &= (2;-T_2;-T_3)_2 \times(2;T_2;T_3)_p  \times (1;(-1,1);(-1,1))_{\infty} 
=d_5h_1h_3h_7=d_1d_5d_7,\\
\res_S(t_2) &=(3;T_2;-3)_2 \times (p; T_2;p)_p  \times (-1;(1,-1);(1,1))_{\infty}
=h_2h_8 =d_2d_6d_7d_8. 
\end{align*} 
Since
\begin{align*}
\Sel(\mathbb{Q},J) \simeq \{\alpha \in \Ker(N: L(S,2) \to \mathbb{Q}^{\times}/\mathbb{Q}^{\times 2}) \mid  \forall v \in S, \; \res_v(\alpha) \in \im(\delta_v)  \}
\supset <t_1,t_2>_{\mathbb{F}_2},
\end{align*} 
it is sufficient to prove that the nineteen elements $d_3, \ldots d_{8}, h_1, \ldots h_{8}$ are linearly independent over $\mathbb{F}_2$.  
Set
\begin{align*}
d_3 &:= (6;6-T_2;6-T_3)_2 \times\1_p \times \1_{\infty}, \\
d_4 &:= \begin{cases}
(5;5-T_2;5-T_3)_2 \times\1_p \times \1_{\infty} & (p \equiv 13 \pmod{32}), \\
(5;13-T_2;13-T_3)_2 \times\1_p \times \1_{\infty} & (p \equiv 29 \pmod{32}), 
\end{cases}\\
d_5 &:=    \1_2 \times (2;T_2;T_3)_p \times \1_{\infty}, \\
d_6 &:= \1_2 \times   (p; T_2;p)_p \times \1_{\infty}, \\
d_7 &:=  \1_2 \times \1_p \times (1;(-1,1);(-1,1))_{\infty}, \\
d_8 &:=   \1_2 \times \1_p \times (-1;(-1,-1);(-1,1))_{\infty}, \\
h_1 &:=(1;1;-1)_2 \times \1_p \times (1;(1,1);(-1,-1))_{\infty}, \\
h_2 &:=(1;1;2)_2 \times (1;1;2)_p \times \1_{\infty}, \\
h_3' &:=h_3d_7=(1;-1;1)_2 \times \1_p \times (1;(1,-1);(-1,1))_{\infty}, \\
h_4' &:=h_4d_7h_1h_3=(1;-\epsilon;-\epsilon')_2 \times (1;\epsilon;\epsilon')_p \times \1_{\infty}, \\
h_5 &:= (1;2;1)_2 \times (1;2;1)_p \times \1_{\infty}, \\
h_6' &:=h_6d_8h_1h_3h_4
=(-1;-1;-\epsilon')_2 \times (1;1;\epsilon')_p \times \1_{\infty}, \\
h_7' &:=h_7d_5d_7h_1h_3= (2;-T_2;-T_3)_2 \times \1_p \times \1_{\infty}, \\
h_8' &:=h_8d_3d_5d_6d_8h_1h_2h_3h_7= (1;T_2-6;2T_3(T_3-6))_2 \times \1_p \times \1_{\infty}. 
\end{align*}
It is sufficient to prove that the above nineteen elements are linearly independent.
\begin{enumerate}
\item By taking the first components at $v=\infty$ into account, we see that there is no relation containing $d_8$. 
\item By taking the first part of the second components at $v=\infty$ into account, we see that there is no relation containing $d_7$. 
\item By taking the second part of the second components at $v=\infty$ into account, we see that there is no relation containing $h_3'$.
\item By taking the first part of the third components at $v=\infty$ into account, we see that there is no relation containing $h_1$.

\item By taking the first components at $v=p$ into account, we see that there is no relation containing $d_5$ and $d_6$.
\item By taking the second components at $v=p$ into account, we see that there is no relation containing $h_4'$ and $h_5$ by \cref{fu}.

\item By taking the first components at $v=2$ into account, we see that there is no relation containing $d_3$, $d_4$, $h_6'$ and $h_7'$.
\item By taking the second components at $v=2$ into account, we see that there is no relation containing $h_8'$.
\item By taking the third components at $v=2$ into account, we see that there is no relation containing $h_2$. 
\end{enumerate}
\end{proof}

%%%%
%%%% Subsection of (i,j) = (2,3), p≡5 (mod 16) 
%%%%

\subsection{Case (4): $p \equiv 5 \pmod{16}$ and $(i,j)=(2,3)$} 

Suppose that $(i,j)=(2,3)$, and $p \equiv 5 \pmod{16}$. 

\begin{lemma}  \label{2 torsion23}
The following two elements form an $\mathbb{F}_{2}$-basis of $J(\mathbb{Q})[2]$, $J(\mathbb{Q}_{2})[2]$ and $J(\mathbb{Q}_{p})[2]$: 
\begin{align*}
 (0,0) &- \infty, & \sum_{\substack{P \in C(\ol{\mathbb{Q}}) \\ x_{P}^{2}-p^3 = 0}}P &- 2\infty.  
 \end{align*}
On the other hand, the following three elements form an $\mathbb{F}_{2}$-basis of $J(\mathbb{Q}_{\infty})[2]$:
\begin{align*}
 (0,0) &- \infty, & (p\sqrt{p},0) &-\infty, & (-p\sqrt{p},0) &-\infty,  & (p\sqrt{2p}, 0)&-\infty.
 \end{align*}
In particular, we have the following table.
\begin{table}[ht]
    \begin{tabular}{|l||l|l|} \hline
  $v$   & $\dim J(\mathbb{Q}_{v})[2]$ & $\dim \im(\delta_{v})$ 
  \\ \hline \hline
     $2$   & $2$ & $4$      \\ \hline
     $p$   & $2$   & $2$       \\ \hline
      $\infty$   & $4$ & $2$   \\ \hline  
    \end{tabular}
\end{table}
\end{lemma}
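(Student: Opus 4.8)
The plan is to mirror the proof of \cref{2 torsion21} almost verbatim, since the passage from $j=1$ to $j=3$ changes only the value of $j$ and leaves intact the square/non-square conditions that drive the dimension count. Write $C=C'^{(p;2,3)}$, so that $f(x)=x(x^2-p^3)(x^2-2p^3)$, and recall that by \cite[Lemma 5.2]{Stoll2014} the group $J(K)[2]$, for a field $K$ of characteristic zero, is freely generated over $\mathbb{F}_2$ by the classes $\sum_{h(\alpha)=0}(\alpha,0)-(\deg h)\infty$ attached to the monic irreducible factors $h$ of $f$ over $K$, subject to the single relation that their total sum vanishes; in particular $\dim_{\mathbb{F}_2}J(K)[2]$ is one less than the number of irreducible factors of $f$ over $K$.

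First I would determine the factorization type of $f$ over each completion, for which the decisive point is whether $p^3$ and $2p^3$ are squares. Since $p^2$ is a square everywhere, $p^3$ is a square in $\mathbb{Q}_v$ exactly when $p$ is, and $2p^3$ exactly when $2p$ is. For $v=p$ both $p^3$ and $2p^3$ have odd valuation, hence are non-squares; for $v=2$ one has $v_2(2p^3)=1$ odd, while $p^3$ is a non-square because $p\equiv 5\pmod{16}$ forces $p\equiv 5\pmod 8$, so $p\notin\mathbb{Q}_2^{\times 2}$; the same non-squareness holds over $\mathbb{Q}$. Consequently $f$ splits into the three irreducible factors $x$, $x^2-p^3$, $x^2-2p^3$ over each of $\mathbb{Q}$, $\mathbb{Q}_2$, $\mathbb{Q}_p$, giving $\dim_{\mathbb{F}_2}J(K)[2]=2$ with the stated basis $(0,0)-\infty$ and $\sum_{x_P^2-p^3=0}P-2\infty$. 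Over $\mathbb{R}$, by contrast, $p^3>0$ and $2p^3>0$, so both quadratics split into real linear factors and $f$ has the five real roots $0,\pm p\sqrt p,\pm p\sqrt{2p}$; thus $\dim_{\mathbb{F}_2}J(\mathbb{R})[2]=4$, with basis the four classes listed, the fifth Weierstrass difference being determined by the relation.

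Finally, the dimensions of $\im(\delta_v)$ follow from the local formula (cf. \cite[p. 451, proof of Lemma 3]{FPS}) comparing $\dim_{\mathbb{F}_2}\im(\delta_v)$ with $\dim_{\mathbb{F}_2}J(\mathbb{Q}_v)[2]$: the correction is $0$ for finite $v\neq 2$, $+2$ at $v=2$, and $-2$ at $v=\infty$, the genus being $2$. Substituting the three $2$-torsion dimensions just computed yields $\dim\im(\delta_2)=4$, $\dim\im(\delta_p)=2$ and $\dim\im(\delta_\infty)=2$, which is exactly the table in the statement.

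I do not expect a genuine obstacle here. The argument is the verbatim analogue of \cref{2 torsion21}, the move from $j=1$ to $j=3$ merely scaling the roots by the square factor $p$, which affects neither the factorization type nor the local square classes that enter the dimension count. The only place that really uses the hypothesis is the assertion that $p$ is a non-square in $\mathbb{Q}_2$, which needs $p\equiv 5\pmod 8$; this is the sole point at which the congruence condition is invoked in proving the present lemma, the finer conditions modulo $16$ and $32$ being needed only later when the explicit generators of $\im(\delta_2)$ in \cref{Jv} are pinned down.
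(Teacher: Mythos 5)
Your proposal is correct and follows essentially the same route as the paper's own proof: both deduce the $2$-torsion bases from \cite[Lemma 5.2]{Stoll2014} via the local square/non-square status of $p$ and $2p$ (equivalently $p^3$ and $2p^3$), and both obtain the $\dim\im(\delta_v)$ column from the formula in \cite[p.~451, proof of Lemma 3]{FPS} with corrections $0$, $+2$, $-2$. You merely spell out the factorization count and the single relation among the Weierstrass classes, which the paper leaves implicit (and your count of a dimension-$4$ group at $v=\infty$ with four basis elements is right, the paper's word ``three'' there being a typo).
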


\begin{proof}
The first and second statements follow from \cite[Lemma 5.2]{Stoll2014}.
Note that
\begin{itemize}
\item $p$ and $2p$ are not square in $\mathbb{Q}_{v}$ for $v=2$, $p$.
\item $2$ and $2p$ are square in $\mathbb{Q}_{\infty}$.
\end{itemize}
The third statement follows from the following formula (cf. \cite[p. 451, proof of Lemma 3]{FPS}).
\[\dim_{\mathbb{F}_2} \im(\delta_{v}) =\dim_{\mathbb{F}_2} J({\mathbb{Q}_{v}})[2]
\begin{cases}
+0 & (v \neq 2, \infty), \\
+2 & (v = 2), \\
-2 & (v = \infty). \\
\end{cases}\]
\end{proof}

\begin{proof}  [{Proof of \cref{Jv} for Case (4)}]
$v=p$: \cref{image of 2-torsion} implies that 
\begin{align*}
 \delta_p((0,0)-\infty) &= -T+(T^2-p^3)(T^2-2p^3)\\
 &=(2;T_2;T_3),\\ 
 \delta_p \left(\sum_{\substack{P \in C(\ol{\mathbb{Q}}) \\ x_{P}^{2}-p^3 = 0}}P - 2\infty \right) &=(T^2-p^3)-T(T^2-2p^3) \\ 
 &=(p; T_2;p).
 \end{align*} 

Hence, the above four elements lie in $\im(\delta_{p})$. 
By taking their $p$-adic valuations into account, we see that they are linearly independent.
By \cref{2 torsion23}, this completes the proof for $v=p$.

$v=\infty$: \cref{image of 2-torsion} implies that 
\begin{align*}
 \delta_{\infty}((0,0)-\infty) &= -T+(T-p\sqrt{p})(T+p\sqrt{p})(T-p\sqrt{2p})(T+p\sqrt{2p})\\
 & = (1;(-1,1);(-1,1)),\\
   \delta_{\infty}((-p\sqrt{p},0)-\infty) &= -(T+p\sqrt{p})+T(T-p\sqrt{p})(T-p\sqrt{2p})(T+p\sqrt{2p})\\
   &   = (-1;(-1,-1);(-1,1)).
 \end{align*} 

Hence, the above two elements lie in $\im(\delta_{\infty})$.
We see that they are linearly independent. 
By \cref{2 torsion23}, this completes the proof for $v=\infty$.

$v=2$: 
First, we show that the above four elements actually lie in $\im(\delta_{2})$.
\cref{image of 2-torsion} implies that
\begin{align*}
 \delta_2((0,0)-\infty) &= -T+(T^2-p^3)(T^2-2p^3)\\ 
 & =  (2;-T_2;-T_3),\\
 \delta_2 \left(\sum_{\substack{P \in C(\ol{\mathbb{Q}}) \\ x_{P}^{2}-p^3 = 0}}P - 2\infty \right) &=(T^2-p^3)-T(T^2-2p^3)\\
   &   =  (3;T_2;-3).\\
 \end{align*} 
Hence, the above two elements lie in $\im(\delta_{2})$. 
\begin{itemize}
\item  Since $f(6)/2^2 \equiv 1 \pmod {8}$, there exists $P \in C(\mathbb{Q}_2)$ such that $x_P=6$. 
In particular, $(6;6-T_2;6-T_3)=\delta_2(P-\infty)$ lies in $\im(\delta_{2})$. 
\item  Suppose that
Since $f(13)/2^2 \equiv 1 \pmod{8}$ (resp.\ $f(5)/2^2 \equiv 1 \pmod{8}$), there exists $Q \in C(\mathbb{Q}_2)$ such that $x_Q=13$ (resp.\  $x_Q=5$)
if $p \equiv 5 \pmod{32}$ (resp.\ $p \equiv 21 \pmod{32}$).
In particular, $(5;13-T_2;13-T_3)=\delta_2(Q-\infty)$ (resp.\ $(5;5-T_2;5-T_3)=\delta_2(Q-\infty)$) lies in $\im(\delta_{2})$
if $p \equiv 5 \pmod{32}$ (resp.\ $p \equiv 21 \pmod{32}$). 
\end{itemize}
Since $v_2(2)=v_2(6)=1$, the first and the third elements and the last element are non-trivial in $L_2^{ \times}/L_2^{ \times 2}$. 
Since $3$ and $5$ is non-trivial in $\mathbb{Q}_2^{\times}/\mathbb{Q}_2^{\times 2}$, the second and the fourth element is  non-trivial in $L_2^{ \times}/L_2^{ \times 2}$. 

Finally, by taking the first and the second components into account, we see that they are linearly independent.  
By \cref{2 torsion23}, this completes the proof for $v=2$.
\end{proof}

%\clearpage

\begin{proof} 
Set elements of $L_2^{ \times}/L_2^{ \times 2} \times L_p^{\times}/L_p^{\times 2} \times L_{\infty}^{\times}/L_{\infty}^{\times 2}$ as follows: 
\begin{align*}  
d_1 &:=  (2;-T_2;-T_3)_2 \times\1_p \times \1_{\infty}, \\
d_2 &:= (3;T_2;-3)_2 \times\1_p \times \1_{\infty}, \\
d_3 &:= (6;6-T_2;6-T_3)_2 \times\1_p \times \1_{\infty}, \\
d_4 &:= \begin{cases}
(5;13-T_2;13-T_3)_2 \times\1_p \times \1_{\infty} & (p \equiv 5 \pmod{32}), \\
(5;5-T_2;5-T_3)_2 \times\1_p \times \1_{\infty} & (p \equiv 21 \pmod{32}), 
\end{cases}\\
d_5 &:=    \1_2 \times (2;T_2;T_3)_p \times \1_{\infty}, \\
d_6 &:= \1_2 \times   (p; T_2;p)_p \times \1_{\infty}, \\
d_7 &:=  \1_2 \times \1_p \times  (1;(-1,1);(-1,1))_{\infty}, \\ 
d_8 &:=   \1_2 \times \1_p \times  (-1;(-1,-1);(-1,1))_{\infty}, \\ 
h_1 &:=(1;1;-1)_2 \times \1_p \times (1;(1,1);(-1,-1)), \\ 
h_2 &:=(1;1;2)_2 \times (1;1;2)_p \times \1_{\infty}, \\
h_3 &:=(1;-1;1)_2 \times \1_p \times  (1;(-1,-1);(1,1)), \\ 
h_4 &:=(1;\epsilon;\epsilon')_2 \times (1;\epsilon;\epsilon')_p \times (1;(1,-1);(1,-1))_{\infty}, \\ 
h_5 &:= (1;2;1)_2 \times (1;2;1)_p \times \1_{\infty}, \\
h_6 &:= (-1;\epsilon;1)_2 \times (1;\epsilon;1)_p \times (-1;(1,-1);(1,1))_{\infty}, \\ 
h_7 &:= (2;T_2;T_3/p)_2 \times (2;T_2;T_3/p)_p \times (1;(1,-1);(1,-1))_{\infty}, \\ 
h_8 &:= (3;T_2;1)_2 \times (p;T_2;1)_p \times  (-1;(1,-1);(1,1))_{\infty}. 
\end{align*}  
Note that
\begin{align*}
\res_S(t_1) &= (2;-T_2;-T_3)_2 \times(2;T_2;T_3)_p  \times (1;(-1,1);(-1,1))_{\infty} 
=d_5h_1h_3h_7=d_1d_5d_7,\\
\res_S(t_2) &=(3;T_2;-3)_2 \times (p; T_2;p)_p  \times (-1;(1,-1);(1,1))_{\infty} 
=h_2h_8 =d_2d_6d_7d_8. 
\end{align*} 
Since
\begin{align*}
\Sel(\mathbb{Q},J) \simeq \{\alpha \in \Ker(N: L(S,2) \to \mathbb{Q}^{\times}/\mathbb{Q}^{\times 2}) \mid  \forall v \in S, \; \res_v(\alpha) \in \im(\delta_v)  \}
\supset <t_1,t_2>_{\mathbb{F}_2},
\end{align*} 
it is sufficient to prove that the nineteen elements $d_3, \ldots d_{8}, h_1, \ldots h_{8}$ are linearly independent over $\mathbb{F}_2$.  
Set
\begin{align*}
d_3 &:= (6;6-T_2;6-T_3)_2 \times\1_p \times \1_{\infty}, \\
d_4 &:= \begin{cases}
(5;13-T_2;13-T_3)_2 \times\1_p \times \1_{\infty} & (p \equiv 5 \pmod{32}), \\
(5;5-T_2;5-T_3)_2 \times\1_p \times \1_{\infty} & (p \equiv 21 \pmod{32}), 
\end{cases}\\
d_5 &:=    \1_2 \times (2;T_2;T_3)_p \times \1_{\infty}, \\
d_6 &:= \1_2 \times   (p; T_2;p)_p \times \1_{\infty}, \\
d_7 &:=  \1_2 \times \1_p \times  (1;(-1,1);(-1,1))_{\infty}, \\ 
d_8 &:=   \1_2 \times \1_p \times  (-1;(-1,-1);(-1,1))_{\infty}, \\ 
h_1 &:=(1;1;-1)_2 \times \1_p \times (1;(1,1);(-1,-1)), \\ 
h_2 &:=(1;1;2)_2 \times (1;1;2)_p \times \1_{\infty}, \\
h_3' &:=h_3d_7=(1;-1;1)_2 \times \1_p \times (1;(1,-1);(-1,1))_{\infty}, \\
h_4' &:=h_4d_7h_1h_3=(1;-\epsilon;-\epsilon')_2 \times (1;\epsilon;\epsilon')_p \times \1_{\infty}, \\
h_5 &:= (1;2;1)_2 \times (1;2;1)_p \times \1_{\infty}, \\
h_6' &:=h_6d_8h_1h_3h_4=(-1;-1;-\epsilon')_2 \times (1;1;\epsilon')_p \times \1_{\infty}, \\
h_7' &:=h_7d_5d_7h_1h_3= (2;-T_2;-T_3)_2 \times \1_p \times \1_{\infty}, \\
h_8' &:=h_8d_3d_5d_6d_8h_1h_2h_3h_7= (1;T_2-6;2T_3(T_3-6))_2 \times \1_p \times \1_{\infty}. 
\end{align*}
It is sufficient to prove that the above nineteen elements are linearly independent.
\begin{enumerate}

\item By taking the first components at $v=\infty$ into account, we see that there is no relation containing $d_8$. 
\item By taking the first part of the second components at $v=\infty$ into account, we see that there is no relation containing $d_7$. 
\item By taking the second part of the second components at $v=\infty$ into account, we see that there is no relation containing $h_3'$.
\item By taking the first part of the third components at $v=\infty$ into account, we see that there is no relation containing $h_1$.

\item By taking the first components at $v=p$ into account, we see that there is no relation containing $d_5$ and $d_6$.
\item By taking the second components at $v=p$ into account, we see that there is no relation containing $h_4'$ and $h_5$ by \cref{fu}.

\item By taking the first components at $v=2$ into account, we see that there is no relation containing $d_3$, $d_4$, $h_6'$ and $h_7'$.
\item By taking the second components at $v=2$ into account, we see that there is no relation containing $h_8'$.
\item By taking the third components at $v=2$ into account, we see that there is no relation containing $h_2$.
\end{enumerate}
\end{proof}

If we could reduce the calculation to $p=13$ (resp.\ $p=5$), the proofs of \cref{rank=0} 
would be short, but we couldn't. \\

\noindent {\bf Acknowledgements.}
%\section*{Acknowledgements}
The author thanks his advisor Kenichi Bannai for reading the draft and giving helpful comments. 
The author also thanks him for warm and constant encouragement, and
Yoshinosuke Hirakawa, Tatsuya Ohshita, Kazuki Yamada and Shuji Yamamoto for helpful comments and discussions.
The author thanks the the anonymous referee for her/his constructive comments in the original draft
to tell him the proof using the descent theorem for $p \equiv 3 \pmod{8}$ and $(i,j)=(0,2)$, $(2,2)$. 
\begin{bibdiv}
\begin{biblist}
\bibselect{hyperelliptic}
\end{biblist}
\end{bibdiv}

\end{document}